\newtheorem{theorem}{Theorem}[section]
\newtheorem{lemma}[theorem]{Lemma}
\newtheorem{remark}[theorem]{Remark}
\newenvironment{proof}[1][Proof]{\noindent\textbf{#1.} }{\ \rule{0.5em}{0.5em}}
\newcommand{\B}[1]{\boldsymbol{#1}}
\begin{document}

\bigskip

\bigskip
\begin{frontmatter}

\title{A general quadratic enrichment of the Crouzeix–Raviart finite element}
  \author[address-CS,address-Pau]{Federico Nudo\corref{corrauthor}}
  \cortext[corrauthor]{Corresponding author}
  \ead{federico.nudo@unical.it}

  \address[address-CS]{Department of Mathematics and Computer Science, University of Calabria, Rende (CS), Italy}
  \address[address-Pau]{Laboratoire de Mathématiques et de leurs Applications, UMR CNRS 5142, Université de Pau et des Pays de l'Adour (UPPA), 64000 Pau, France}

\begin{abstract}
The Crouzeix--Raviart finite element method is widely recognized in the field of finite element analysis due to its nonconforming nature. The main goal of this paper is to present a general strategy for enhancing the Crouzeix--Raviart finite element using quadratic polynomial functions and three additional general degrees of freedom. To achieve this, we present a characterization result on the enriched degrees of freedom, enabling to define a new enriched finite element. This general approach is employed to introduce two distinct admissible families of enriched degrees of freedom. Numerical results demonstrate an enhancement in the accuracy of the proposed method when compared to the standard Crouzeix--Raviart finite element, confirming the effectiveness of the proposed enrichment strategy.

\end{abstract}

\begin{keyword}
Crouzeix--Raviart finite element\sep Enriched finite element method \sep Nonconforming finite element 
\end{keyword}

\end{frontmatter}

\section{Crouzeix–Raviart finite element} \label{sec1}

Let $T\subset\mathbb{R}^2$ be a nondegenerate triangle with vertices $\B{v}_1, \B{v}_2, \B{v}_3$ and barycentric coordinates $\lambda_1, \lambda_2, \lambda_3$. For $j=1,2,3$, we denote by $\Gamma_j$ the edge of $T$ that does not contain the vertex $\B{v}_j$.  The Crouzeix--Raviart finite element is a frequently employed technique in various applications, recognized for its nonconforming nature~\cite{CR1973:2022:CR}. A finite element is considered \textit{nonconforming} when the global approximation obtained is a globally discontinuous function; otherwise, it is referred to as \textit{conforming}.
The use of nonconforming finite elements proves particularly advantageous when dealing with intricate geometries and irregular mesh structures. The Crouzeix--Raviart finite element excels in handling problems featuring solutions with low regularity, making it a valuable choice across a spectrum of engineering and scientific applications~\cite{chatzipantelidis1999finite, hansbo2003discontinuous, burman2005stabilized, carstensen2013discrete, zhu2014analysis, younes2014combination, di2015extension, carstensen2015adaptive, ve2019quasi, chen2019finite, kaltenbach2023error}. Unlike traditional conforming elements, the flexibility of the Crouzeix--Raviart finite element becomes essential, especially in scenarios with discontinuities or challenges posed by irregular meshes.

In order to define the Crouzeix--Raviart finite element some settings are needed. We consider the following linear functionals
\begin{equation*}
 \mathcal{I}_j^{\mathrm{CR}}(f) = \frac{1}{\left\lvert \Gamma_j\right\rvert}\int_{\Gamma_j}f(s)\,ds=\int_{0}^{1}f\left(t\B{v}_{j+1}+ (1-t)\B{v}_{j+2}\right)\, dt, \qquad j=1,2,3,
\end{equation*}
with the convention that
\begin{equation*}
    \B{v}_4 = \B{v}_1, \quad \B{v}_5 = \B{v}_2, \quad \lambda_4 = \lambda_1, \quad \lambda_5 = \lambda_2.
\end{equation*}
We shall assume throughout the paper that the vertex indices are oriented counterclockwise.
The Crouzeix--Raviart finite element is locally defined as
\begin{equation*}
\left(T, \mathbb{P}_{1}(T), \Sigma_T^{\mathrm{CR}}\right),    
\end{equation*}
where 
\begin{equation*}
     \mathbb{P}_{1}(T) = \operatorname{span}\{\lambda_1, \lambda_2, \lambda_3\}, \qquad \Sigma_T^{\mathrm{CR}} = \left\{\mathcal{I}_j^{\mathrm{CR}} \, : \, j=1,2,3\right\}.
\end{equation*}
We denote by
\begin{equation}\label{CRbasis}
    \varphi_i^{\mathrm{CR}}=1-2\lambda_i, \qquad i=1,2,3,
\end{equation}
the basis functions associated to the Crouzeix–Raviart finite element~\cite{CR1973:2022:CR}. This means that the following conditions are satisfied
\begin{equation*}
  \mathbb{P}_1(T)=  \operatorname{span}\{\varphi_i^{\mathrm{CR}}\, :\, i=1,2,3\}
\end{equation*}
and 
\begin{equation*}
    \mathcal{I}_j^{\mathrm{CR}}(\varphi_i^{\mathrm{CR}})=\delta_{ij},
\end{equation*}
where $\delta_{ij}$ is the Kronecker delta symbol. 
In this setting, we define the approximation operator relative to the Crouzeix--Raviart finite element as follows
\begin{equation}\label{pilinch9}
\begin{array}{rcl}
{\Pi}_1^{{\mathrm{CR}}}: C(T) &\rightarrow& \mathbb{P}_1(T)
\\
f &\mapsto& \displaystyle{\sum_{j=1}^{3} \mathcal{I}^{\mathrm{CR}}_j(f)\varphi^{\mathrm{CR}}_j}.
\end{array}
\end{equation}

Then, when employing the Crouzeix--Raviart finite element to solve a differential problem, the domain is partitioned into triangles. For each of these triangles, the solution of the differential problem is approximated using linear polynomials. However, the accuracy of the approximation produced by linear polynomials is relatively low. To address this limitation, a commonly employed strategy is based on the enrichment of finite elements. 
This strategy involves enhancing the representation of the solution within each triangle by adding other functions, also called \textit{enrichment functions}~\cite{Guessab:2022:SAB, Guessab:2016:AADM, Guessab:2016:RM, guessab2017unified, DellAccio:2022:QFA, DellAccio:2022:AUE, DellAccio:2022:ESF, DellAccio2023AGC, DellAccio2023nuovo}. 
Enriched finite elements provide a more refined representation of the solution within each element, contributing to a more accurate global solution.  The main goal of this paper is to present a general strategy to enrich the Crouzeix--Raviart finite element by using quadratic polynomial functions and three general enriched linear functionals.

\section{A comprehensive quadratic enrichment approach}\label{sec2}
The main goal of this work is to present a general enrichment of the Crouzeix--Raviart finite element using three general degrees of freedom and quadratic polynomials.  Since, in general, the higher-order finite element spaces exhibit more local behavior than lower-order spaces.
To achieve this goal, we consider the following enriched linear functionals
\begin{eqnarray*}
\mathcal{F}^{{\mathrm{enr}}}_1: f\in C(T) &\rightarrow& \mathcal{F}^{{\mathrm{enr}}}_1(f)\in \mathbb{R}
\\
\mathcal{F}^{{\mathrm{enr}}}_2: f\in C(T) &\rightarrow& \mathcal{F}^{{\mathrm{enr}}}_2(f)\in \mathbb{R}\\
\mathcal{F}^{{\mathrm{enr}}}_3: f\in C(T) &\rightarrow& \mathcal{F}^{{\mathrm{enr}}}_3(f)\in \mathbb{R}.
\end{eqnarray*}
We consider the triple 
 \begin{equation}\label{trips}
   \mathcal{S}= (T,  \mathbb{P}_2(T),\Sigma_{T}^{{\mathrm{enr}}}),
 \end{equation}
 where 
 \begin{equation*}
     \mathbb{P}_{2}(T)=\operatorname{span}\{\lambda_1,\lambda_2,\lambda_3, \lambda_1^2,\lambda_2^2,\lambda_3^2\}, \qquad \Sigma_{T}^{{\mathrm{enr}}}=  \left\{\mathcal{I}_j^{\mathrm{CR}},\mathcal{F}^{\mathrm{enr}}_{j}\, : \, j=1,2,3\right\}.
 \end{equation*}   
In the following, we establish necessary and sufficient conditions for the enriched linear functionals $\mathcal{F}^{\mathrm{enr}}_{j}$, $j=1,2,3$, to ensure that the enriched triple~\eqref{trips} is a well-defined finite element.  In other words, we aim to prove that the only element $p\in \mathbb{P}_2(T)$ satisfying
\begin{equation*}
\mathcal{I}^{\mathrm{CR}}_j(p)=0, \quad \mathcal{F}^{\mathrm{enr}}_j(p)=0, \qquad j=1,2,3,
\end{equation*}
is $p=0$. For this purpose, we first require some preliminary results. Throughout the paper, we employ the basis functions of $\mathbb{P}_2(T)$, introduced in~\cite{DellAccio:2022:AUE} and given by
\begin{equation}\label{basisAF3}
\mathcal{B}_{AF3}=\{\varphi_i, \phi_i\, : \, i=1,2,3\},
\end{equation}
which satisfy conditions~\cite[Ch. 2]{Davis:1975:IAA}
\begin{equation*}
\mathcal{L}^{\mathrm{enr}}_j(\varphi_i)=\delta_{ij}, \quad  \mathcal{I}_j^{\mathrm{CR}}(\varphi_i)=0, \quad i,j=1,2,3,
\end{equation*}
\begin{equation*}
    \mathcal{L}^{\mathrm{enr}}_j(\phi_i)=0, \quad  \mathcal{I}_j^{\mathrm{CR}}(\phi_i)=\delta_{ij}, \quad i,j=1,2,3,
\end{equation*}
where
\begin{equation}\label{linfunAF3}
\mathcal{L}_j^{\mathrm{enr}}(f)= f(\B{v}_j), \qquad j=1,2,3.
\end{equation}
These functions are known as the basis functions of $\mathbb{P}_2(T)$ associated to the enriched finite element presented in~\cite{DellAccio:2022:AUE}
\begin{equation}\label{AF3}
AF3= \left(T,\mathbb{P}_2(T),{\Sigma}_T^{\mathrm{enr}}\right),
\end{equation}
where
\begin{equation*}
{\Sigma}_T^{{\mathrm{enr}}}= \left\{\mathcal{I}_j^{\mathrm{CR}},\mathcal{L}^{\mathrm{enr}}_j\, : \, j=1,2,3\right\}.
\end{equation*}
In the forthcoming theorem~\cite[Thm. 2.6]{DellAccio:2022:AUE} the explicit expressions of the basis functions are proved.
 \begin{theorem}\label{th2}
        The basis functions $\varphi_i,\phi_i$,  $i=1,2,3,$ of $\mathbb{P}_2(T)$ associated to the finite element $AF3$ have the following expressions
\begin{equation}
\varphi_1 = \lambda_1 (1-  3\lambda_2  -3 \lambda_3), \qquad  \varphi_2=\lambda_2 (1-  3\lambda_1  -3 \lambda_3), \qquad \varphi_3 = \lambda_3 (1-  3\lambda_1  -3 \lambda_2),
    \label{basisphi}
\end{equation}
\begin{equation}
\phi_1 = 6 \lambda_2\lambda_3,\qquad 
   \phi_2 = 6 \lambda_1\lambda_3,\qquad
    \phi_3 = 6\lambda_1\lambda_2.
    \label{basisvarphi}
\end{equation}
      \end{theorem}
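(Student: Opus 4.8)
The plan is to verify directly that the six functions in~\eqref{basisphi}--\eqref{basisvarphi} satisfy the defining biorthogonality relations, rather than to set up and invert the associated $6\times 6$ linear system from scratch. Because the finite element $AF3$ in~\eqref{AF3} is unisolvent, its dual basis is unique; hence it suffices to check that each proposed $\varphi_i$ (respectively $\phi_i$) lies in $\mathbb{P}_2(T)$ and satisfies $\mathcal{L}_j^{\mathrm{enr}}(\varphi_i)=\delta_{ij}$, $\mathcal{I}_j^{\mathrm{CR}}(\varphi_i)=0$ (respectively $\mathcal{L}_j^{\mathrm{enr}}(\phi_i)=0$, $\mathcal{I}_j^{\mathrm{CR}}(\phi_i)=\delta_{ij}$) for $i,j=1,2,3$. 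Membership in $\mathbb{P}_2(T)$ is immediate, since each expression is a bivariate polynomial of degree at most two; using $\lambda_1+\lambda_2+\lambda_3=1$ one checks that the listed spanning set does coincide with the full space of quadratics, so in particular every cross term $\lambda_i\lambda_j$ can be rewritten in the stated basis.

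For the vertex functionals~\eqref{linfunAF3} I would use the defining property $\lambda_i(\B{v}_j)=\delta_{ij}$. Evaluating $\varphi_1=\lambda_1(1-3\lambda_2-3\lambda_3)$ at the three vertices gives $\varphi_1(\B{v}_1)=1$ and $\varphi_1(\B{v}_2)=\varphi_1(\B{v}_3)=0$, because the factor $\lambda_1$ vanishes there; likewise $\phi_1=6\lambda_2\lambda_3$ vanishes at every vertex, since at least one of $\lambda_2,\lambda_3$ is zero at each. This settles $\mathcal{L}_j^{\mathrm{enr}}(\varphi_i)=\delta_{ij}$ and $\mathcal{L}_j^{\mathrm{enr}}(\phi_i)=0$ for $i=1$, and the remaining indices follow by the cyclic symmetry of the formulas.

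The substantive part is the edge averages. On the edge $\Gamma_j$ the barycentric coordinate $\lambda_j$ vanishes, while along the parametrization $t\mapsto t\B{v}_{j+1}+(1-t)\B{v}_{j+2}$ one has $\lambda_{j+1}=t$ and $\lambda_{j+2}=1-t$. Substituting this into the integrand of $\mathcal{I}_j^{\mathrm{CR}}$ reduces each functional to a univariate integral over $[0,1]$, evaluated with $\int_0^1 t\,dt=\tfrac12$ and $\int_0^1 t(1-t)\,dt=\tfrac16$. For instance, on $\Gamma_1$ the factor $\lambda_1$ makes $\varphi_1$ vanish, so $\mathcal{I}_1^{\mathrm{CR}}(\varphi_1)=0$, whereas on $\Gamma_2$ one gets $\int_0^1[(1-t)-3t(1-t)]\,dt=\tfrac12-3\cdot\tfrac16=0$, and similarly on $\Gamma_3$; for $\phi_1$ the computation on $\Gamma_1$ gives $\int_0^1 6t(1-t)\,dt=1$, while $\phi_1$ vanishes identically on $\Gamma_2$ and $\Gamma_3$. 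Thus $\mathcal{I}_j^{\mathrm{CR}}(\varphi_1)=0$ and $\mathcal{I}_j^{\mathrm{CR}}(\phi_1)=\delta_{1j}$, and all remaining cases follow by the cyclic relabeling $1\mapsto2\mapsto3\mapsto1$.

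I expect no genuine obstacle: the statement is essentially a verification, and the only points requiring care are the index bookkeeping induced by the convention $\B{v}_4=\B{v}_1$, $\B{v}_5=\B{v}_2$ and the correct translation of each edge parametrization into barycentric coordinates. The one structural fact that must be invoked rather than computed is the unisolvence of $AF3$, which guarantees that the verified functions are indeed the unique dual basis.
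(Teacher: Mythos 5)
Your proof is correct. Note that the paper itself does not prove this theorem: it imports it verbatim from the cited reference (Theorem 2.6 of the paper introducing $AF3$), so there is no internal argument to compare against; your direct verification is a valid, self-contained substitute. The structure of your argument is sound: checking the biorthogonality relations $\mathcal{L}_j^{\mathrm{enr}}(\varphi_i)=\delta_{ij}$, $\mathcal{I}_j^{\mathrm{CR}}(\varphi_i)=0$, $\mathcal{L}_j^{\mathrm{enr}}(\phi_i)=0$, $\mathcal{I}_j^{\mathrm{CR}}(\phi_i)=\delta_{ij}$ suffices, and your edge computations (reduction to $\int_0^1 t\,dt=\tfrac12$ and $\int_0^1 t(1-t)\,dt=\tfrac16$ via $\lambda_{j+1}=t$, $\lambda_{j+2}=1-t$, $\lambda_j=0$ on $\Gamma_j$) are accurate. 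One small remark: you do not actually need to invoke unisolvence of $AF3$ as an external fact. Since $\dim\mathbb{P}_2(T)=6$, the biorthogonality relations you verify already force linear independence of the six functions (apply the functionals to any vanishing linear combination), hence they form a basis, and uniqueness of the dual basis follows for free; so your verification simultaneously re-establishes unisolvence rather than depending on it.
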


      \begin{lemma}\label{lem1}
  Let $p \in \mathbb{P}_2(T)$ such that 
  \begin{equation}\label{condann}
       \mathcal{I}_j^{\mathrm{CR}}(p)=0, \qquad j=1,2,3.
  \end{equation}
Then, we get 
\begin{equation}\label{dnnexprima}
 \begin{bmatrix}
\mathcal{F}_{1}^{\mathrm{enr}}(\varphi_1) &  \mathcal{F}_{1}^{\mathrm{enr}}(\varphi_2) & \mathcal{F}_{1}^{\mathrm{enr}}(\varphi_3)\\
\mathcal{F}_{2}^{\mathrm{enr}}(\varphi_1) & \mathcal{F}_{2}^{\mathrm{enr}}(\varphi_2)& \mathcal{F}_{2}^{\mathrm{enr}}(\varphi_3) \\
\mathcal{F}_{3}^{\mathrm{enr}}(\varphi_1) & \mathcal{F}_{3}^{\mathrm{enr}}(\varphi_2) &  \mathcal{F}_{3}^{\mathrm{enr}}(\varphi_3)
\end{bmatrix}
 \begin{bmatrix}
\mathcal{L}_1^{\mathrm{enr}}(p)\\
\mathcal{L}_2^{\mathrm{enr}}(p)\\
\mathcal{L}_3^{\mathrm{enr}}(p)
\end{bmatrix}
= \begin{bmatrix}
\mathcal{F}_{1}^{{\mathrm{enr}}}(p)\\
\mathcal{F}_{2}^{{\mathrm{enr}}}(p)\\
\mathcal{F}_{3}^{{\mathrm{enr}}}(p)
\end{bmatrix},
\end{equation}
where the linear evaluation functionals $\mathcal{L}^{\mathrm{enr}}_j$, $j=1,2,3$, are defined in~\eqref{linfunAF3}.
\end{lemma}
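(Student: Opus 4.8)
The plan is to exploit the biorthogonality (dual-basis) structure of the $AF3$ basis $\mathcal{B}_{AF3}=\{\varphi_i,\phi_i : i=1,2,3\}$ recalled before the lemma, expand $p$ explicitly in this basis, and then apply the functionals $\mathcal{F}_j^{\mathrm{enr}}$ by linearity. Since $\mathcal{B}_{AF3}$ is a basis of the six-dimensional space $\mathbb{P}_2(T)$, I would first write
\[
p = \sum_{i=1}^3 a_i\varphi_i + \sum_{i=1}^3 b_i\phi_i
\]
for unique scalars $a_i,b_i$, $i=1,2,3$.

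Next, I would recover these coefficients from the duality relations satisfied by $\mathcal{B}_{AF3}$. Applying $\mathcal{L}_k^{\mathrm{enr}}$ and using $\mathcal{L}_k^{\mathrm{enr}}(\varphi_i)=\delta_{ik}$ together with $\mathcal{L}_k^{\mathrm{enr}}(\phi_i)=0$ yields $a_k=\mathcal{L}_k^{\mathrm{enr}}(p)$; applying $\mathcal{I}_k^{\mathrm{CR}}$ and using $\mathcal{I}_k^{\mathrm{CR}}(\varphi_i)=0$ together with $\mathcal{I}_k^{\mathrm{CR}}(\phi_i)=\delta_{ik}$ yields $b_k=\mathcal{I}_k^{\mathrm{CR}}(p)$. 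The hypothesis~\eqref{condann} then forces $b_k=0$ for every $k$, so that $p$ reduces to
\[
p=\sum_{i=1}^3\mathcal{L}_i^{\mathrm{enr}}(p)\,\varphi_i.
\]

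Finally, I would apply each functional $\mathcal{F}_j^{\mathrm{enr}}$ to this representation. By linearity,
\[
\mathcal{F}_j^{\mathrm{enr}}(p)=\sum_{i=1}^3\mathcal{F}_j^{\mathrm{enr}}(\varphi_i)\,\mathcal{L}_i^{\mathrm{enr}}(p),\qquad j=1,2,3,
\]
which is exactly the $j$-th scalar equation encoded by the matrix identity~\eqref{dnnexprima}; collecting the three rows gives the claim. There is no genuine obstacle here: the only point requiring care is the bookkeeping of which duality relation annihilates which part of the expansion, the key observation being that the constraint~\eqref{condann} eliminates precisely the $\phi_i$-component of $p$, leaving a representation in terms of the vertex values $\mathcal{L}_i^{\mathrm{enr}}(p)=p(\B{v}_i)$ alone.
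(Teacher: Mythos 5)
Your proof is correct and follows essentially the same route as the paper's: expand $p$ in the $AF3$ basis, use the hypothesis~\eqref{condann} to eliminate the $\phi_i$-component, and apply each $\mathcal{F}_j^{\mathrm{enr}}$ by linearity to obtain the three scalar equations forming~\eqref{dnnexprima}. The only difference is that you spell out the recovery of the coefficients from the duality relations, which the paper leaves implicit.
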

\begin{proof}
 Let $p \in \mathbb{P}_2(T)$ such that
 \begin{equation*}
\mathcal{I}_j^{\mathrm{CR}}(p)=0, \qquad j=1,2,3.
 \end{equation*}
Then we can express $p$ with respect to the basis~\eqref{basisAF3} as follows
 \begin{equation}\label{eqpb11}
     p=\sum_{k=1}^{3} \mathcal{L}^{\mathrm{enr}}_k(p)\varphi_k+\sum_{k=1}^3 \mathcal{I}^{\mathrm{CR}}_k(p) \phi_k= \sum_{k=1}^{3} \mathcal{L}^{\mathrm{enr}}_k(p)\varphi_k.
 \end{equation}
Applying the linear functional $\mathcal{F}_{j}^{\mathrm{enr}}$, $j=1,2,3$, to both sides of~\eqref{eqpb11}, we obtain the following equations
\begin{eqnarray*}
    \mathcal{F}_{1}^{\mathrm{enr}}(p) &=& \sum_{k=1}^{3} \mathcal{L}_k^{\mathrm{enr}}(p)\mathcal{F}_{1}^{\mathrm{enr}}(\varphi_k)\\
        \mathcal{F}_{2}^{\mathrm{enr}}(p) &=& \sum_{k=1}^{3} \mathcal{L}_k^{\mathrm{enr}}(p)\mathcal{F}_{2}^{\mathrm{enr}}(\varphi_k)\\
        \mathcal{F}_{3}^{\mathrm{enr}}(p) &=& \sum_{k=1}^{3} \mathcal{L}_k^{\mathrm{enr}}(p)\mathcal{F}_{3}^{\mathrm{enr}}(\varphi_k).
\end{eqnarray*}
These equations can be expressed in the matrix form as follows
\begin{equation}\label{dnnexprdima}
 \begin{bmatrix}
\mathcal{F}_{1}^{\mathrm{enr}}(\varphi_1) &  \mathcal{F}_{1}^{\mathrm{enr}}(\varphi_2) & \mathcal{F}_{1}^{\mathrm{enr}}(\varphi_3)\\
\mathcal{F}_{2}^{\mathrm{enr}}(\varphi_1) & \mathcal{F}_{2}^{\mathrm{enr}}(\varphi_2)& \mathcal{F}_{2}^{\mathrm{enr}}(\varphi_3) \\
\mathcal{F}_{3}^{\mathrm{enr}}(\varphi_1) & \mathcal{F}_{3}^{\mathrm{enr}}(\varphi_2) &  \mathcal{F}_{3}^{\mathrm{enr}}(\varphi_3)
\end{bmatrix}
 \begin{bmatrix}
\mathcal{L}_1^{\mathrm{enr}}(p)\\
\mathcal{L}_2^{\mathrm{enr}}(p)\\
\mathcal{L}_3^{\mathrm{enr}}(p)
\end{bmatrix}
= \begin{bmatrix}
\mathcal{F}_{1}^{{\mathrm{enr}}}(p)\\
\mathcal{F}_{2}^{{\mathrm{enr}}}(p)\\
\mathcal{F}_{3}^{{\mathrm{enr}}}(p)
\end{bmatrix}.
\end{equation}
\end{proof}

In the following, we denote by
\begin{equation}\label{matrixN}
    N= \begin{bmatrix}
\mathcal{F}_{1}^{\mathrm{enr}}(\varphi_1) &  \mathcal{F}_{1}^{\mathrm{enr}}(\varphi_2) & \mathcal{F}_{1}^{\mathrm{enr}}(\varphi_3)\\
\mathcal{F}_{2}^{\mathrm{enr}}(\varphi_1) & \mathcal{F}_{2}^{\mathrm{enr}}(\varphi_2)& \mathcal{F}_{2}^{\mathrm{enr}}(\varphi_3) \\
\mathcal{F}_{3}^{\mathrm{enr}}(\varphi_1) & \mathcal{F}_{3}^{\mathrm{enr}}(\varphi_2) &  \mathcal{F}_{3}^{\mathrm{enr}}(\varphi_3)
\end{bmatrix}.
\end{equation}
\begin{remark}\label{remarkimp}
The previous theorem establishes a relationship between the evaluations of a polynomial $p$ at the vertices of the triangle $T$ and the values of the enrichment functionals $\mathcal{F}_j^{\mathrm{enr}}$, $j=1,2,3,$ on the same polynomial, under the condition that $p$ satisfies~\eqref{condann}. In other words, if the matrix $N$ is nonsingular, a consequence of the previous theorem is that for any polynomial $p$ satisfying~\eqref{condann}, we have
\begin{equation}\label{condsupimp}
 \begin{bmatrix}
\mathcal{L}_1^{\mathrm{enr}}(p)\\
\mathcal{L}_2^{\mathrm{enr}}(p)\\
\mathcal{L}_3^{\mathrm{enr}}(p)
\end{bmatrix}=
\begin{bmatrix}
0\\
0\\
0
\end{bmatrix}\iff \begin{bmatrix}
\mathcal{F}_1^{\mathrm{enr}}(p)\\
\mathcal{F}_2^{\mathrm{enr}}(p)\\
\mathcal{F}_3^{\mathrm{enr}}(p)
\end{bmatrix}=
\begin{bmatrix}
0\\
0\\
0
\end{bmatrix}.
\end{equation}
\end{remark}

Now, we give a necessary and sufficient condition on the enriched linear functionals $\mathcal{F}_j^{\mathrm{enr}}$, $j=1,2,3,$ under which the triple $\mathcal{S}$ defined in~\eqref{trips} is a finite element. 
\begin{theorem}\label{th1} 
The triple $\mathcal{S}$, defined in~\eqref{trips}, is a finite element if and only if the matrix $N$, as defined in~\eqref{matrixN}, is nonsingular.
\end{theorem}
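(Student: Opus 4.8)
The plan is to reduce the unisolvence of $\mathcal{S}$ to an invertibility statement about $N$, exploiting the fact that the three conditions~\eqref{condann} already confine any candidate polynomial to the three-dimensional subspace $\operatorname{span}\{\varphi_1,\varphi_2,\varphi_3\}$. Since $\dim\mathbb{P}_2(T)=6=|\Sigma_T^{\mathrm{enr}}|$, the triple $\mathcal{S}$ is a finite element precisely when the only $p\in\mathbb{P}_2(T)$ annihilated by all six functionals $\mathcal{I}_j^{\mathrm{CR}},\mathcal{F}_j^{\mathrm{enr}}$ is $p=0$. I would therefore prove the two implications of the biconditional separately.

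First I would assume $N$ is nonsingular and take an arbitrary $p\in\mathbb{P}_2(T)$ with $\mathcal{I}_j^{\mathrm{CR}}(p)=0$ and $\mathcal{F}_j^{\mathrm{enr}}(p)=0$ for $j=1,2,3$. Since $p$ satisfies~\eqref{condann}, Lemma~\ref{lem1} applies and yields $N\,[\mathcal{L}_1^{\mathrm{enr}}(p),\mathcal{L}_2^{\mathrm{enr}}(p),\mathcal{L}_3^{\mathrm{enr}}(p)]^{\top}=[\mathcal{F}_1^{\mathrm{enr}}(p),\mathcal{F}_2^{\mathrm{enr}}(p),\mathcal{F}_3^{\mathrm{enr}}(p)]^{\top}=0$. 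Invertibility of $N$ forces $\mathcal{L}_j^{\mathrm{enr}}(p)=0$ for every $j$; substituting into the expansion~\eqref{eqpb11} gives $p=\sum_{k=1}^{3}\mathcal{L}_k^{\mathrm{enr}}(p)\varphi_k=0$. Hence $\mathcal{S}$ is unisolvent. This is exactly the left-to-right equivalence already recorded in Remark~\ref{remarkimp}, which I would invoke to streamline the argument.

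For the converse I would argue by contraposition: assuming $N$ is singular, I would exhibit a nonzero $p$ annihilated by all six functionals. Choosing a nonzero vector $\B{c}=(c_1,c_2,c_3)^{\top}\in\ker N$ and setting $p=\sum_{k=1}^{3}c_k\varphi_k$, the biorthogonality relations defining the basis~\eqref{basisAF3} give $\mathcal{I}_j^{\mathrm{CR}}(p)=0$ and $\mathcal{L}_j^{\mathrm{enr}}(p)=c_j$ for all $j$, so that Lemma~\ref{lem1} returns $[\mathcal{F}_j^{\mathrm{enr}}(p)]_j=N\B{c}=0$. Because the $\varphi_k$ are linearly independent and $\B{c}\neq0$, this $p$ is a nonzero element of $\mathbb{P}_2(T)$ annihilated by every functional in $\Sigma_T^{\mathrm{enr}}$, so $\mathcal{S}$ fails to be a finite element.

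The argument is essentially linear-algebraic once Lemma~\ref{lem1} is in hand, so I do not anticipate a serious obstacle. The only point requiring care is the conceptual observation that the three constraints~\eqref{condann} reduce the relevant problem from all of $\mathbb{P}_2(T)$ to the subspace $\operatorname{span}\{\varphi_1,\varphi_2,\varphi_3\}$, on which the values $\mathcal{L}_1^{\mathrm{enr}}(p),\mathcal{L}_2^{\mathrm{enr}}(p),\mathcal{L}_3^{\mathrm{enr}}(p)$ act as coordinates and $N$ represents precisely the action of the enriched functionals. Identifying this structure is what turns the unisolvence question into the single condition $\det N\neq0$.
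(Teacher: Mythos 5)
Your proof is correct and follows essentially the same route as the paper: both directions hinge on Lemma~\ref{lem1} together with the $AF3$ basis~\eqref{basisAF3}, with the forward implication forcing $\mathcal{L}_j^{\mathrm{enr}}(p)=0$ and hence $p=0$ via the expansion~\eqref{eqpb11}, and the converse producing a nonzero annihilated polynomial from a kernel vector of $N$. The only cosmetic difference is that you build that polynomial explicitly as $\sum_k c_k\varphi_k$, where the paper obtains the same polynomial by invoking the interpolation property of the finite element $AF3$.
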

\begin{proof}
Firstly, let us assume that the matrix $N$, defined in~\eqref{matrixN}, is nonsingular, and we aim to establish that $\mathcal{S}$ is a finite element. For this purpose, we consider $p\in \mathbb{P}_2(T)$ such that 
\begin{equation*}
  \mathcal{I}_j^{\mathrm{CR}}(p)= 0, \quad
  \mathcal{F}_{j}^{\mathrm{enr}}(p)=0, \qquad j=1,2,3.
\end{equation*}
To establish $\mathcal{S}$ as a finite element, we must prove that $p=0$. By~\eqref{condsupimp} of Remark~\ref{remarkimp}, we deduce that the polynomial $p$ also satisfies
 \begin{equation*}
  \mathcal{I}_j^{\mathrm{CR}}(p)= 0, \quad
 \mathcal{L}^{\mathrm{enr}}_j(p)=0, \qquad j=1,2,3. 
\end{equation*} 
Then, since the triple $AF3$, defined in~\eqref{AF3}, is a finite element, as established in~\cite[Theorem 2.6]{DellAccio:2022:AUE}, we conclude that $p=0$. 

For the reverse implication, let us assume that the matrix $N$ is singular. Then, there exists 
\begin{equation}\label{alpa}
\B \alpha=[\alpha_1,\alpha_2,\alpha_3]^T\neq[0,0,0]^T    
\end{equation}
 such that
\begin{equation}\label{dnnexprdima1}
    N\B \alpha=\begin{bmatrix}
0\\
0\\
0
\end{bmatrix}.
\end{equation}
Since $AF3$ is a finite element, we can find a polynomial $p\in \mathbb{P}_2(T)$, different from zero, such that
\begin{equation*}
    \mathcal{L}_j^{\mathrm{enr}}(p)=\alpha_j, \quad \mathcal{I}^{\mathrm{CR}}_j(p)=0, \qquad j=1,2,3.
\end{equation*}
 Then, by~\eqref{dnnexprdima} and~\eqref{dnnexprdima1}, we have found a polynomial $p\neq 0$ such that
\begin{equation*}
    \mathcal{I}_j^{\mathrm{CR}}(p)=0, \quad \mathcal{F}_j^{\mathrm{enr}}(p)=0, \qquad j=1,2,3,
\end{equation*}
contradicting our assumption that $\mathcal{S}$ is a finite element.
\end{proof}

In the following, we assume that the matrix $N$ defined in~\eqref{matrixN} is nonsingular, and we denote its inverse by
\begin{equation}\label{invN}
       N^{-1}= [\B c_1,\B c_2, \B c_3],
\end{equation}
where $\B c_i$, $i=1,2,3,$ is a column vector. An immediate implication of Theorem~\ref{th1} is the existence of a basis
\begin{equation*}
    \mathcal{B}_{\mathcal{S}}=\{\rho_i, \tau_i\, :\, i=1,2,3\}
\end{equation*}
of $\mathbb{P}_2(T)$ satisfying the following properties
\begin{eqnarray}
\label{propvarphi}
&& \mathcal{I}_j^{\mathrm{CR}}(\rho_i) = \delta_{ij}, \quad \mathcal{F}^{\mathrm{enr}}_{j}(\rho_i) = 0, \qquad i,j=1,2,3, \\ \label{propphi}
&&\mathcal{I}_j^{\mathrm{CR}}(\tau_i) = 0, \quad \mathcal{F}^{\mathrm{enr}}_{j}(\tau_i) = \delta_{ij}, \qquad i,j=1,2,3.
\end{eqnarray}
These functions, referred to as the basis functions of $\mathbb{P}_2(T)$ associated to the enriched finite element $\mathcal{S}$, are explicitly expressed in the subsequent theorem. In the following we denote by  $\left\langle \cdot ,\cdot\right\rangle$ the standard inner product in $\mathbb{R}^3$.

\begin{theorem}\label{th2allalf} 
We assume that the matrix $N$, defined in~\eqref{matrixN} is a nonsingular matrix whose inverse can be written as in~\eqref{invN}. Then, the basis functions $\rho_i,\tau_i$,  $i=1,2,3,$ of $\mathbb{P}_2(T)$ associated to the finite element $\mathcal{S}$ have the following expressions
\begin{equation}\label{rhoetau}
    \rho_i=\left\langle \B w_i, \B \varphi \right\rangle +\phi_i, \qquad \tau_i=\left\langle \B c_i, \B \varphi \right\rangle,
\end{equation}
where 
\begin{equation}\label{notimpsup}
  \B \varphi=\begin{bmatrix}
\varphi_{1}\\
\varphi_{2}\\
\varphi_{3}
\end{bmatrix}, \qquad   \B w_i=-\B c_1 \mathcal{F}_1^{\mathrm{enr}}(\phi_i)-\B c_2\mathcal{F}_2^{\mathrm{enr}}(\phi_i)-\B c_3\mathcal{F}_3^{\mathrm{enr}}(\phi_i), \qquad i=1,2,3
\end{equation}
and $\varphi_i,\psi_i$, $i=1,2,3,$ are defined in~\eqref{basisphi} and~\eqref{basisvarphi}. 
\end{theorem}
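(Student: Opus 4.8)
The plan is to verify directly that the functions defined by \eqref{rhoetau}--\eqref{notimpsup} satisfy the biorthogonality conditions \eqref{propvarphi}--\eqref{propphi}. Since $N$ is nonsingular, Theorem~\ref{th1} guarantees that $\mathcal{S}$ is a finite element, so the dual basis of $\mathbb{P}_2(T)$ relative to $\Sigma_T^{\mathrm{enr}}$ exists and is uniquely determined by \eqref{propvarphi}--\eqref{propphi}; hence it suffices to check that the proposed $\rho_i,\tau_i$ meet these conditions. First I would note that both $\rho_i$ and $\tau_i$ lie in $\mathbb{P}_2(T)$, being linear combinations of the AF3 basis functions $\varphi_k,\phi_i$ from \eqref{basisAF3}.

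Next I would treat $\tau_i=\langle \B c_i,\B\varphi\rangle=\sum_{k=1}^3 (\B c_i)_k\varphi_k$. Applying $\mathcal{I}_j^{\mathrm{CR}}$ and using $\mathcal{I}_j^{\mathrm{CR}}(\varphi_k)=0$ immediately gives $\mathcal{I}_j^{\mathrm{CR}}(\tau_i)=0$. Applying $\mathcal{F}_j^{\mathrm{enr}}$ and recalling that the $(j,k)$ entry of $N$ is $\mathcal{F}_j^{\mathrm{enr}}(\varphi_k)$ yields $\mathcal{F}_j^{\mathrm{enr}}(\tau_i)=\sum_k \mathcal{F}_j^{\mathrm{enr}}(\varphi_k)(\B c_i)_k=(N\B c_i)_j$. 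The key algebraic fact is that $\B c_i$ is the $i$-th column of $N^{-1}$, so $N\B c_i$ is the $i$-th column of $NN^{-1}=I$, i.e. the standard basis vector $\B e_i$; therefore $\mathcal{F}_j^{\mathrm{enr}}(\tau_i)=\delta_{ij}$, establishing \eqref{propphi}.

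Then I would treat $\rho_i=\langle \B w_i,\B\varphi\rangle+\phi_i$. Applying $\mathcal{I}_j^{\mathrm{CR}}$ and using $\mathcal{I}_j^{\mathrm{CR}}(\varphi_k)=0$ together with $\mathcal{I}_j^{\mathrm{CR}}(\phi_i)=\delta_{ij}$ gives $\mathcal{I}_j^{\mathrm{CR}}(\rho_i)=\delta_{ij}$. For the enriched functionals, applying $\mathcal{F}_j^{\mathrm{enr}}$ produces $\mathcal{F}_j^{\mathrm{enr}}(\rho_i)=(N\B w_i)_j+\mathcal{F}_j^{\mathrm{enr}}(\phi_i)$. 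Substituting the definition $\B w_i=-\sum_{m=1}^3 \B c_m\,\mathcal{F}_m^{\mathrm{enr}}(\phi_i)$ from \eqref{notimpsup} and again using $N\B c_m=\B e_m$ gives $N\B w_i=-\sum_m \B e_m\,\mathcal{F}_m^{\mathrm{enr}}(\phi_i)$, whose $j$-th entry is $-\mathcal{F}_j^{\mathrm{enr}}(\phi_i)$; the two terms cancel and $\mathcal{F}_j^{\mathrm{enr}}(\rho_i)=0$, establishing \eqref{propvarphi}.

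The computation carries no genuine difficulty; the only point requiring care is the matrix bookkeeping, namely recognizing that $\B c_i$ being a column of $N^{-1}$ gives $N\B c_i=\B e_i$ (a column relation, not a row relation), and observing that the vector $\B w_i$ in \eqref{notimpsup} is engineered precisely so that its contribution $N\B w_i$ cancels the value $\mathcal{F}_j^{\mathrm{enr}}(\phi_i)$ arising from the $\phi_i$ term. Uniqueness of the dual basis, inherited from the finite element property of $\mathcal{S}$ supplied by Theorem~\ref{th1}, then closes the argument.
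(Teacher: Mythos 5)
Your proof is correct, but it runs in the opposite direction from the paper's. The paper \emph{derives} the formulas: it takes the dual basis functions $\rho_1,\tau_1$ (whose existence follows from Theorem~\ref{th1}), expands them in the $AF3$ basis as $\rho_1=\sum_k \mathcal{L}^{\mathrm{enr}}_k(\rho_1)\varphi_k+\phi_1$ and $\tau_1=\sum_k \mathcal{L}^{\mathrm{enr}}_k(\tau_1)\varphi_k$ (the $\phi$-coefficients being fixed by $\mathcal{I}^{\mathrm{CR}}_j(\rho_1)=\delta_{1j}$ and $\mathcal{I}^{\mathrm{CR}}_j(\tau_1)=0$), then applies the functionals $\mathcal{F}^{\mathrm{enr}}_j$ to get the linear systems $N\B{L}(\rho_1)=-[\mathcal{F}^{\mathrm{enr}}_j(\phi_1)]_j$ and $N\B{L}(\tau_1)=\B e_1$, which it solves with $N^{-1}$ to obtain $\B w_1$ and $\B c_1$. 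You instead \emph{verify}: you take the stated formulas as candidates, check the biorthogonality conditions \eqref{propvarphi}--\eqref{propphi} via $N\B c_i=\B e_i$ and the engineered cancellation $N\B w_i=-[\mathcal{F}^{\mathrm{enr}}_j(\phi_i)]_j$, and then invoke uniqueness of the dual basis (guaranteed by Theorem~\ref{th1}) to conclude. The two arguments use the same decomposition and the same matrix identities, essentially read in opposite directions: the paper's derivation explains where $\B w_i$ and $\B c_i$ come from, while your verification is slightly shorter and makes explicit the uniqueness step that the paper leaves implicit in its phrase ``the basis functions associated to $\mathcal{S}$.'' Both are complete; just note that your route genuinely requires citing uniqueness, since checking the conditions alone only shows the formulas give \emph{a} biorthogonal system, not \emph{the} one named in the statement.
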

\begin{proof}
Without loss of generality, we prove the theorem for $i=1$.

Firstly, we prove the expression for $\rho_1$. 
Since $\rho_1\in\mathbb{P}_2(T)$, it can be expressed with respect to the basis~\eqref{basisAF3} associated to the finite element $AF3$ as follows
\begin{equation*}
    \rho_1=\sum_{k=1}^3 \mathcal{L}^{\mathrm{enr}}_k(\rho_1) \varphi_k+\sum_{k=1}^3 \mathcal{I}^{\mathrm{CR}}_k(\rho_1) \phi_k.
\end{equation*}
By~\eqref{propvarphi}, we have
\begin{equation} \label{djeie}
    \rho_1=\sum_{k=1}^3 \mathcal{L}^{\mathrm{enr}}_k(\rho_1) \varphi_k+\phi_1=\left\langle \B{L}(\rho_1), \B \varphi\right\rangle +\phi_1,
\end{equation}
where
\begin{equation*}
\B{L}(\rho_1)=\begin{bmatrix}
\mathcal{L}_{1}^{{\mathrm{enr}}}(\rho_1)\\
\mathcal{L}_{2}^{{\mathrm{enr}}}(\rho_1)\\
\mathcal{L}_{3}^{{\mathrm{enr}}}(\rho_1)
\end{bmatrix}. 
\end{equation*}
Now, let us compute the vector $\B L(\rho_1)$. 
To this aim, by applying the linear functionals $\mathcal{F}^{\mathrm{enr}}_{j}$, $j=1,2,3$, to both sides of~\eqref{djeie}, leveraging~\eqref{propvarphi}, we obtain the following linear system
\begin{equation*}
\begin{bmatrix}
0\\
0\\
0
\end{bmatrix}=N
 \B{L}(\rho_1)
+\begin{bmatrix}
\mathcal{F}_{1}^{{\mathrm{enr}}}(\phi_1)\\
\mathcal{F}_{2}^{{\mathrm{enr}}}(\phi_1)\\
\mathcal{F}_{3}^{{\mathrm{enr}}}(\phi_1)
\end{bmatrix},
\end{equation*}
where $N$ is defined in~\eqref{matrixN}. 
Thus, since we have assumed that $N$ is a nonsingular matrix we get 
\begin{equation}\label{mdiendekm}
\B{L}(\rho_1)=-N^{-1}
 \begin{bmatrix}
\mathcal{F}_1^{\mathrm{enr}}(\phi_1)\\
\mathcal{F}_2^{\mathrm{enr}}(\phi_1)\\
\mathcal{F}_3^{\mathrm{enr}}(\phi_1)
\end{bmatrix}.
\end{equation}
By~\eqref{invN}, we have 
\begin{equation*}
  \B{L}(\rho_1)=-\B c_1 \mathcal{F}_1^{\mathrm{enr}}(\phi_1)-\B c_2\mathcal{F}_2^{\mathrm{enr}}(\phi_1)-\B c_3\mathcal{F}_3^{\mathrm{enr}}(\phi_1).
\end{equation*}
Substituting this value in~\eqref{djeie}, the desired statement follows.

It remains to prove the expression for $\tau_1$.  Since $\tau_1\in\mathbb{P}_2(T)$, it can be expressed with respect to the basis~\eqref{basisAF3} associated to the finite element $AF3$ as follows
\begin{equation*}
    \tau_1=\sum_{k=1}^3 \mathcal{L}^{\mathrm{enr}}_k(\tau_1) \varphi_k+\sum_{k=1}^3 \mathcal{I}^{\mathrm{CR}}_k(\tau_1) \phi_k.
\end{equation*}
By~\eqref{propphi}, we have
\begin{equation} \label{djeie1}
    \tau_1=\sum_{k=1}^3 \mathcal{L}_k(\tau_1) \varphi_k=\left\langle \B{L}(\tau_1),\B \varphi\right\rangle,
\end{equation}
where
\begin{equation*}
\B{L}(\tau_1)= \begin{bmatrix}
\mathcal{L}^{\mathrm{enr}}_1(\tau_1)\\
\mathcal{L}^{\mathrm{enr}}_2(\tau_1)\\
\mathcal{L}^{\mathrm{enr}}_3(\tau_1)
\end{bmatrix}.
\end{equation*}
Now, let us compute the vector $\B{L}(\tau_1)$. 
To this aim, by applying the linear functionals $\mathcal{F}^{\mathrm{enr}}_{j}$, $j=1,2,3$, to both sides of~\eqref{djeie1}, leveraging~\eqref{propphi}, we obtain the following linear system
\begin{equation*}
\begin{bmatrix}
1\\
0\\
0
\end{bmatrix}=N
\B{L}(\tau_1)
\end{equation*}
where $N$ is defined in~\eqref{matrixN}. Then, we get
\begin{equation*}
    N^{-1}\begin{bmatrix}
1\\
0\\
0
\end{bmatrix}= \B{L}(\tau_1).
\end{equation*}
By~\eqref{invN}, we have 
\begin{equation*}
   \B{L}(\tau_1)= \B c_1.
\end{equation*}
Substituting this value in~\eqref{djeie1}, the desired expression for $\tau_1$ is proved. 
    Analogously, the theorem can be established for $i=2$ and $i=3$; consequently, the thesis follows.
\end{proof}

\begin{theorem}
 We assume that the matrix $N$, defined in~\eqref{matrixN} is a nonsingular matrix whose inverse can be written as in~\eqref{invN}. The approximation operator relative to the enriched finite element $\mathcal{S}$
\begin{equation}
\begin{array}{rcl}
{\Pi}_{\mathcal{S}}^{{\mathrm{enr}}}: C(T) &\rightarrow& \mathbb{P}_2(T)
\\
f &\mapsto& \displaystyle{\sum_{j=1}^{3}  \mathcal{I}^{\mathrm{CR}}_j(f)\rho_j+ \sum_{j=1}^{3}\mathcal{F}^{\mathrm{enr}}_{j}(f)}\tau_j,
\end{array}
\label{pilinch9C}
\end{equation}
reproduces all polynomials of $\mathbb{P}_2(T)$ and satisfies 
\begin{eqnarray*}
\mathcal{I}^{\mathrm{CR}}_j\left({\Pi}_{\mathcal{S}}^{\mathrm{enr}}[f]\right)&=&\mathcal{I}^{\mathrm{CR}}_j(f), \qquad j=1,2,3, \\ \mathcal{F}^{\mathrm{enr}}_{j}\left({\Pi}_{\mathcal{S}}^{\mathrm{enr}}[f]\right)&=&\mathcal{F}^{\mathrm{enr}}_{j}(f), \qquad j=1,2,3.
\end{eqnarray*} 
\end{theorem}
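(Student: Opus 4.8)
The plan is to derive both the two commutation identities and the polynomial reproduction property directly from the biorthogonality relations~\eqref{propvarphi} and~\eqref{propphi} satisfied by the basis $\mathcal{B}_{\mathcal{S}}=\{\rho_i,\tau_i:i=1,2,3\}$, whose existence is guaranteed by Theorem~\ref{th1} since $N$ is nonsingular. Throughout, the only structural facts I would use are the linearity of the functionals $\mathcal{I}_j^{\mathrm{CR}}$ and $\mathcal{F}_j^{\mathrm{enr}}$ and the fact that $\mathcal{B}_{\mathcal{S}}$ is a basis of $\mathbb{P}_2(T)$.

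First I would establish the two interpolation identities. Applying the linear functional $\mathcal{I}_i^{\mathrm{CR}}$ to the definition~\eqref{pilinch9C} and using linearity yields
\[
\mathcal{I}_i^{\mathrm{CR}}\left(\Pi_{\mathcal{S}}^{\mathrm{enr}}[f]\right)=\sum_{j=1}^{3}\mathcal{I}_j^{\mathrm{CR}}(f)\,\mathcal{I}_i^{\mathrm{CR}}(\rho_j)+\sum_{j=1}^{3}\mathcal{F}_j^{\mathrm{enr}}(f)\,\mathcal{I}_i^{\mathrm{CR}}(\tau_j).
\]
By~\eqref{propvarphi} and~\eqref{propphi} we have $\mathcal{I}_i^{\mathrm{CR}}(\rho_j)=\delta_{ij}$ and $\mathcal{I}_i^{\mathrm{CR}}(\tau_j)=0$, so only the term $j=i$ in the first sum survives and the right-hand side collapses to $\mathcal{I}_i^{\mathrm{CR}}(f)$. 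The second identity is entirely symmetric: applying $\mathcal{F}_i^{\mathrm{enr}}$ to~\eqref{pilinch9C} and invoking $\mathcal{F}_i^{\mathrm{enr}}(\rho_j)=0$ and $\mathcal{F}_i^{\mathrm{enr}}(\tau_j)=\delta_{ij}$ leaves only $\mathcal{F}_i^{\mathrm{enr}}(f)$.

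For polynomial reproduction I would take an arbitrary $p\in\mathbb{P}_2(T)$ and expand it in the basis $\mathcal{B}_{\mathcal{S}}$ as $p=\sum_{j=1}^{3}a_j\rho_j+\sum_{j=1}^{3}b_j\tau_j$. Applying $\mathcal{I}_i^{\mathrm{CR}}$ and $\mathcal{F}_i^{\mathrm{enr}}$ to this expansion and using the same biorthogonality relations identifies the coefficients as $a_i=\mathcal{I}_i^{\mathrm{CR}}(p)$ and $b_i=\mathcal{F}_i^{\mathrm{enr}}(p)$. Substituting these back, the expansion of $p$ coincides exactly with $\Pi_{\mathcal{S}}^{\mathrm{enr}}[p]$ as given by~\eqref{pilinch9C}, whence $\Pi_{\mathcal{S}}^{\mathrm{enr}}[p]=p$.

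I do not expect any serious obstacle here, since everything reduces to the dual-basis structure already secured by Theorem~\ref{th1}. The only point requiring a moment of care is the justification that $\mathcal{B}_{\mathcal{S}}$ is genuinely a basis of $\mathbb{P}_2(T)$, which is precisely the content of $\mathcal{S}$ being a finite element; granting this, the reproduction property could alternatively be obtained as a corollary of the two interpolation identities, because a linear map into $\mathbb{P}_2(T)$ that fixes all six defining functionals of $\mathcal{S}$ must act as the identity on $\mathbb{P}_2(T)$.
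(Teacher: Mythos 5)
Your proposal is correct and takes essentially the same approach as the paper: the paper's entire proof is the one-line observation that the result follows from the biorthogonality relations~\eqref{propvarphi} and~\eqref{propphi}, and your argument simply makes explicit the linearity and basis-expansion steps that this remark leaves implicit.
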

\begin{proof}
The proof is a consequence of~\eqref{propvarphi} and~\eqref{propphi}.
\end{proof}

\begin{remark}
    If we consider the following enriched linear functionals 
    \begin{equation*}
\mathcal{F}^{\mathrm{enr}}_j(f)=f(\B v_j), \qquad j=1,2,3,
    \end{equation*}
then, the triple $\mathcal{S}$ corresponds to the enriched finite element $AF3$ introduced in~\cite{DellAccio:2022:AUE}.
\end{remark}

\section{Admissible enriched linear functionals}\label{admenrlinfun}
In this section, we present two families of \textit{admissible enriched linear functionals} $\mathcal{F}_j^{\mathrm{enr}}$, $j=1,2,3,$ that is, those which satisfy
    \begin{equation*}
           \operatorname{det}(N)\neq0,
    \end{equation*}
where $N$ is defined in~\eqref{matrixN}.

\subsection{Admissible enriched linear functionals of first class}
We denote by
\begin{equation*}
    \B m_{1}=\frac{\B v_{2}+\B v_{3}}{2}\in \Gamma_1, \qquad  \B m_{2}=\frac{\B v_{3}+\B v_{1}}{2}\in \Gamma_2, \qquad  \B m_{3}=\frac{\B v_{1}+\B v_{2}}{2}\in \Gamma_3, 
\end{equation*}
the midpoints of the triangle $T$.
For a fixed parameter $\gamma>-1$, we consider the following enriched linear functionals 
\begin{equation}\label{enrfun1}
\mathcal{F}_{j,\gamma}^{\mathrm{enr}}(f)=\int_{0}^1 w_{\gamma}(t)f(t \B m_{j+1}+(1-t)\B m_{j+2})\, dt, \qquad j=1,2,3,
\end{equation}
where $w_{\gamma}$ is the weight function on $[0,1]$, defined by
\begin{equation*}
    w_{\gamma}(t)=t^{\gamma}(1-t)^{\gamma}.
\end{equation*}
Here, we use the convention that
\begin{equation*}
    \B m_4=\B m_1, \qquad \B m_5=\B m_2.
\end{equation*}
Below, we demonstrate the admissibility of the enriched linear functionals given by~\eqref{enrfun1}. To this aim, we denote by
\begin{equation}\label{beta}
    B(z_1,z_2)=\int_{0}^{1}u^{z_1-1}(1-u)^{z_2-1} du, \qquad z_1,z_2>-1,
\end{equation}
the classical Euler beta function~\cite{Abramowitz:1948:HOM}. This function is symmetric, i.e. 
\begin{equation}\label{simmbeta}
    B(z_1,z_2)=B(z_2,z_1), \qquad z_1,z_2>-1,
\end{equation}
and satisfies the following property
\begin{equation}\label{propbetafun}
B(z_1+1,z_2)=\frac{z_1}{z_1+z_2}B(z_1,z_2),  \qquad z_1,z_2>-1.  
\end{equation}
Moreover, we denote also by
\begin{equation}\label{impnot}
\sigma_{\gamma}=B(\gamma+1,\gamma+1), \qquad K_{\gamma}=-\frac{(5\gamma+6)}{8(2\gamma+3)}.
\end{equation}
We recall the following property of the barycenter coordinates
\begin{equation}\label{prop2}
\lambda_i(t \B{x} + (1-t) \B{y}) = t \lambda_i(\B{x}) + (1-t)\lambda_i(\B{y}), \qquad \B{x}, \B{y}\in T, \quad i=1,2,3,
\end{equation}
and
\begin{equation}\label{prop3}
    \lambda_i(\B m_j)=\frac{1}{2}(1-\delta_{ij}), \qquad i,j=1,2,3.
\end{equation}
\begin{theorem}\label{t1d}
For any $\gamma\neq 0$, the enriched linear functionals~\eqref{enrfun1} are admissible. 
\end{theorem}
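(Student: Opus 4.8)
The plan is to reduce admissibility to a single explicit determinant computation. By Theorem~\ref{th1}, admissibility of the functionals $\mathcal{F}_{j,\gamma}^{\mathrm{enr}}$ is equivalent to $\det N \neq 0$, where $N$ is the matrix in~\eqref{matrixN} built from the $AF3$ basis functions $\varphi_i$ of~\eqref{basisphi}. So the entire task is to compute the nine entries $\mathcal{F}_{j,\gamma}^{\mathrm{enr}}(\varphi_i)$ and evaluate the resulting determinant.

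First I would restrict the barycentric coordinates to each integration segment. For $\mathcal{F}_{1,\gamma}^{\mathrm{enr}}$ the path is $t\B m_2 + (1-t)\B m_3$; combining the affine property~\eqref{prop2} with the midpoint values~\eqref{prop3} gives $\lambda_1 \equiv 1/2$, $\lambda_2 = (1-t)/2$, and $\lambda_3 = t/2$ along this segment. Substituting into the closed forms~\eqref{basisphi} then yields $\varphi_1 \equiv -1/4$ (constant, since $\lambda_1$ is frozen at $1/2$) together with the quadratics $\varphi_2 = -\tfrac14(1-t)(1+3t)$ and $\varphi_3 = -\tfrac14 t(4-3t)$.

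Next I would integrate each of these against the weight $w_\gamma(t)=t^\gamma(1-t)^\gamma$. Every resulting integral is a beta integral of the form $B(\gamma+1+a,\gamma+1+b)$, and the recurrence~\eqref{propbetafun} together with the symmetry~\eqref{simmbeta} reduces each one to a rational multiple of $\sigma_\gamma = B(\gamma+1,\gamma+1)$. I expect this to produce $\mathcal{F}_{1,\gamma}^{\mathrm{enr}}(\varphi_1) = -\sigma_\gamma/4$ and $\mathcal{F}_{1,\gamma}^{\mathrm{enr}}(\varphi_2)=\mathcal{F}_{1,\gamma}^{\mathrm{enr}}(\varphi_3)=\sigma_\gamma K_\gamma$, with $K_\gamma$ exactly the constant introduced in~\eqref{impnot} — this is precisely the bookkeeping that notation was designed to absorb, and it is the most computation-heavy part of the argument. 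The cyclic symmetry of the construction (advancing $j\mapsto j+1$ simultaneously on the functionals and on the midpoints) then fills in the remaining two rows with no new integrals, giving $N = \sigma_\gamma M$, where $M$ has every diagonal entry equal to $-1/4$ and every off-diagonal entry equal to $K_\gamma$.

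Finally, the determinant of such a symmetric matrix is clean: writing $M = (a-b)I + bJ$ with $a=-1/4$, $b=K_\gamma$ and $J$ the all-ones matrix, the eigenvalues are $a+2b$ (simple) and $a-b$ (double), so $\det M = (a+2b)(a-b)^2$. A short simplification gives $a-b = \gamma/(8(2\gamma+3))$ and $a+2b = -(7\gamma+9)/(4(2\gamma+3))$, whence $\det N = \sigma_\gamma^3\det M = -\sigma_\gamma^3\,\gamma^2(7\gamma+9)/(256(2\gamma+3)^3)$. For $\gamma > -1$ we have $\sigma_\gamma > 0$, $2\gamma+3 > 0$, and $7\gamma+9 > 0$, so $\det N$ vanishes exactly when $\gamma = 0$; hence for every parameter $\gamma \neq 0$ the matrix $N$ is nonsingular, which is the asserted admissibility. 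The only genuine obstacle is keeping the beta-integral reductions organized and confirming the factor $K_\gamma$ matches~\eqref{impnot}; once the structured matrix $M$ is in hand, the determinant and the conclusion are immediate.
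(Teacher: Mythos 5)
Your proposal is correct and follows essentially the same route as the paper: reduce admissibility to $\det N \neq 0$ via Theorem~\ref{th1}, use the beta-function identities~\eqref{simmbeta} and~\eqref{propbetafun} with the barycentric properties~\eqref{prop2}--\eqref{prop3} to obtain $N = \sigma_\gamma M$ with diagonal $-\tfrac14$ and off-diagonal $K_\gamma$, and evaluate the determinant of this structured matrix. Incidentally, your sign is the accurate one --- a direct check (e.g.\ $\gamma=1$ gives $\det M = -\tfrac{1}{2000}$) confirms $\det N = -\frac{\gamma^2(7\gamma+9)}{256(2\gamma+3)^3}\sigma_\gamma^3$, whereas the paper states this quantity with a positive sign; the discrepancy is immaterial since only nonvanishing for $\gamma \neq 0$ is needed.
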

\begin{proof}
To prove this theorem, we must prove that the matrix $N$, defined in~\eqref{matrixN}, is nonsingular. 
Using~\eqref{basisphi}, we conduct direct calculations, leveraging the properties of the beta function~\eqref{simmbeta} and~\eqref{propbetafun}, as well as the properties of the barycentric coordinates~\eqref{prop2} and~\eqref{prop3}, we can derive
\begin{equation*}
\mathcal{F}_{j,\gamma}^{\mathrm{enr}}(\varphi_j)=-\frac{\sigma_{\gamma}}{4}, \qquad \mathcal{F}_{j,\gamma}^{\mathrm{enr}}(\varphi_i)=\sigma_{\gamma}K_{\gamma}, \qquad i,j=1,2,3, \quad i\neq j,
\end{equation*}
where $\sigma_{\gamma},K_{\gamma}$ are defined in~\eqref{impnot}.
Then, the matrix~\eqref{matrixN} can be written as
\begin{equation}\label{matrixN1}
    N= \sigma_{\gamma}\begin{bmatrix}
-\frac{1}{4} & K_{\gamma} &  K_{\gamma} \\
K_{\gamma} & -\frac{1}{4} & K_{\gamma} \\
K_{\gamma} &  K_{\gamma} &-\frac{1}{4}
\end{bmatrix}.
\end{equation}
Its determinant is given by
\begin{equation*}
\frac{\gamma^2(7\gamma+9)}{256(2 \gamma+3)^3}\sigma_{\gamma}^3,
\end{equation*}
which is different from zero for any $\gamma>-1$ and $\gamma\neq0$.
\end{proof}

In the next theorem, we compute the explicit expression of the basis function of $\mathbb{P}_2(T)$ associated to the enriched finite element
\begin{equation*}
    GN_{\gamma}=(T,\mathbb{P}_2(T),\Sigma_{\gamma, T}^{\mathrm{enr}})
\end{equation*}
where
\begin{equation*}
\Sigma_{\gamma,T}^{\mathrm{enr}}=\left\{\mathcal{I}_j^{\mathrm{CR}}, \mathcal{F}_{j,\gamma}^{\mathrm{enr}}\, :\, j=1,2,3\right\}
\end{equation*}
and $\mathcal{F}_{j,\gamma}^{\mathrm{enr}}$, $j=1,2,3,$ is defined in~\eqref{enrfun1}. 

\begin{theorem}\label{th2allal1f} 
The basis functions $\rho_i,\tau_i$,  $i=1,2,3,$ of $\mathbb{P}_2(T)$ associated to the finite element $GN_{\gamma}$ have the following expressions
\begin{equation}
    \rho_i=c_{\gamma} \varphi_i +d_{\gamma}\sum_{\substack{k=1 \\ k \neq i}}^{3} \varphi_k + \phi_i,  \quad
    \tau_i=\frac{1}{\sigma_{\gamma}\Delta_{\gamma}}\left((-4K_{\gamma}+1)\varphi_i+ 4K_{\gamma}\sum_{\substack{k=1 \\ k \neq i}}^{3} \varphi_k\right), \qquad i=1,2,3,
\end{equation}
where
\begin{equation}\label{setting}
    c_{\gamma}=\frac{3(11 \gamma^2+ 33 \gamma+24)}{\gamma(7\gamma+9)}, \qquad d_{\gamma}=-\frac{3(\gamma+3) (3\gamma+4)}{\gamma(7\gamma+9)}, \qquad \Delta_{\gamma}= \frac{\gamma(7\gamma+9)}{8(2\gamma+3)^2}.
\end{equation}
      \end{theorem}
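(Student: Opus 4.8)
The plan is to specialize the general formulas~\eqref{rhoetau} of Theorem~\ref{th2allalf} to the enriched functionals~\eqref{enrfun1}, for which the matrix $N$ has already been computed explicitly in~\eqref{matrixN1}. Only two ingredients are then missing: the columns $\B c_i$ of $N^{-1}$ (cf.~\eqref{invN}), and the values $\mathcal{F}_{j,\gamma}^{\mathrm{enr}}(\phi_i)$ that enter the vectors $\B w_i$ in~\eqref{notimpsup}. Following the proof of Theorem~\ref{th2allalf}, it suffices to establish the claim for $i=1$ and invoke the cyclic symmetry of the construction to conclude for $i=2,3$.

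First I would invert $N$. By~\eqref{matrixN1} we may write $N=\sigma_\gamma M$, where $M$ is the symmetric matrix with constant diagonal $-\tfrac14$ and constant off-diagonal $K_\gamma$; such a matrix has the circulant form $M=\alpha I+\beta J$ with $J$ the all-ones matrix, so its inverse is again of the form $M^{-1}=pI+qJ$. The scalars $p,q$ follow immediately from the two eigenvalues $-\tfrac14+2K_\gamma$ (on $[1,1,1]^T$) and $-\tfrac14-K_\gamma$ (on its orthogonal complement), or directly from the adjugate. This gives $\B c_i=\tfrac1{\sigma_\gamma}\bigl(p\,\boldsymbol{e}_i+q\,[1,1,1]^T\bigr)$, and substituting into $\tau_i=\langle\B c_i,\B\varphi\rangle$ yields $\tau_i=\tfrac1{\sigma_\gamma}\bigl((p+q)\varphi_i+q\sum_{k\neq i}\varphi_k\bigr)$. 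Writing $p,q$ in terms of $K_\gamma$ and collecting the common factor $\Delta_\gamma$ from~\eqref{setting} then produces exactly the stated expression for $\tau_i$, with coefficients $-4K_\gamma+1$ and $4K_\gamma$.

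Next I would compute the quantities $\mathcal{F}_{j,\gamma}^{\mathrm{enr}}(\phi_i)$. Using $\phi_i=6\lambda_{i+1}\lambda_{i+2}$ from~\eqref{basisvarphi}, the affine property~\eqref{prop2}, and $\lambda_i(\B m_j)=\tfrac12(1-\delta_{ij})$ from~\eqref{prop3}, each factor $\lambda_k(t\B m_{j+1}+(1-t)\B m_{j+2})$ becomes an affine function of $t$, so $\mathcal{F}_{j,\gamma}^{\mathrm{enr}}(\phi_i)$ collapses to a beta integral of the type $\int_0^1 t^{\gamma+a}(1-t)^{\gamma+b}\,dt=B(\gamma+1+a,\gamma+1+b)$ with $a,b\in\{0,1\}$. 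By the cyclic symmetry and the symmetry~\eqref{simmbeta} of the beta function, only two distinct values occur: a diagonal value $D=\mathcal{F}_{j,\gamma}^{\mathrm{enr}}(\phi_j)$ and an off-diagonal value $O=\mathcal{F}_{j,\gamma}^{\mathrm{enr}}(\phi_i)$ for $i\neq j$, both of which reduce via~\eqref{propbetafun} to explicit rational multiples of $\sigma_\gamma=B(\gamma+1,\gamma+1)$. Inserting these into~\eqref{notimpsup} gives the compact form $\B w_i=-(D-O)\B c_i-O\sum_{j}\B c_j$, whence, by linearity, $\rho_i=\langle\B w_i,\B\varphi\rangle+\phi_i=-(D-O)\tau_i-O\sum_{j}\tau_j+\phi_i$. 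Since $\tau_i$ and the fully symmetric sum $\sum_j\tau_j$ are already known combinations of the $\varphi_k$, collecting the coefficients of $\varphi_i$ and of $\sum_{k\neq i}\varphi_k$ yields the announced shape $\rho_i=c_\gamma\varphi_i+d_\gamma\sum_{k\neq i}\varphi_k+\phi_i$.

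I expect the main obstacle to be purely the algebraic bookkeeping rather than any conceptual difficulty: reducing the several beta evaluations to a common multiple of $\sigma_\gamma$, and then simplifying the resulting nested rational functions in $K_\gamma=-\tfrac{5\gamma+6}{8(2\gamma+3)}$ (see~\eqref{impnot}) down to the compact forms $c_\gamma,d_\gamma,\Delta_\gamma$ of~\eqref{setting}. The two features that keep this manageable are the reduction to the single case $i=1$ and the observation that all off-diagonal $\mathcal{F}$-values coincide, so that the $3\times3$ systems effectively depend on only two scalar unknowns.
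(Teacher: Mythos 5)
Your proposal is correct and takes essentially the same route as the paper: specialize Theorem~\ref{th2allalf} by computing $N^{-1}$ explicitly (the paper records in~\eqref{matfrixN1} exactly the $pI+qJ$ form you derive) and the two distinct values $\mathcal{F}_{j,\gamma}^{\mathrm{enr}}(\phi_j)=\tfrac{3(\gamma+1)}{4(2\gamma+3)}\sigma_\gamma$ and $\mathcal{F}_{j,\gamma}^{\mathrm{enr}}(\phi_i)=\tfrac34\sigma_\gamma$ ($i\neq j$), then substitute into~\eqref{rhoetau} and~\eqref{notimpsup}. The circulant-inversion and beta-integral details you spell out are precisely what the paper compresses into ``straightforward calculations,'' and your reduction to the case $i=1$ plus cyclic symmetry mirrors the structure of the proof of Theorem~\ref{th2allalf}.
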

\begin{proof}
To prove this theorem, we use the general Theorem~\ref{th2allalf}. To this aim, by~\eqref{matrixN1} and by straightforward calculations, we get
\begin{equation}\label{matfrixN1}
    N^{-1}= \frac{1}{\sigma_{\gamma}\Delta_{\gamma}}\begin{bmatrix}
-4K_{\gamma}+1 & 4K_{\gamma} & 4K_{\gamma}\\
4K_{\gamma} & -4K_{\gamma}+1 & 4K_{\gamma} \\
4K_{\gamma} &  4K_{\gamma} &-4K_{\gamma}+1
\end{bmatrix}.
\end{equation}
Moreover, by using the
properties of the beta function~\eqref{simmbeta} and~\eqref{propbetafun} as well as the properties of the barycentric coordinates~\eqref{prop2} and~\eqref{prop3}, we can derive
\begin{equation*}
\mathcal{F}_{j,\gamma}^{\mathrm{enr}}(\phi_j)=\frac{3(\gamma+1)}{4(2\gamma+3)}\sigma_{\gamma}, \qquad \mathcal{F}_{j,\gamma}^{\mathrm{enr}}(\phi_i)=\frac{3}{4}\sigma_{\gamma}, \qquad i,j=1,2,3, \quad i\neq j.
\end{equation*}
Then, by~\eqref{rhoetau} and~\eqref{notimpsup}, the statement follows.  
\end{proof}

\subsection{Admissible enriched linear functionals of second class}
In this section,  we provide another example of admissible enriched linear functionals. To achieve this, let us denote by $\B m^{\star}$ the barycenter of $T$, that is
\begin{equation*}
    \B m^{\star}=\frac{\B v_1+\B v_2+\B v_3}{3}.
\end{equation*} 
In what follows, we adopt the same notations as in the previous subsection. For a fixed parameter $\mu>-1$, we consider the following enriched linear functionals 
\begin{equation}\label{enrfjun1}
\mathcal{G}_{j,\mu}^{\mathrm{enr}}(f)=\int_{0}^1 w_{\mu}(t)f(t \B m_{j}+(1-t)\B m^{\star})\, dt, \qquad j=1,2,3.
\end{equation}
Additionally, we define
\begin{equation}\label{fjfoidf}
D_{\mu}=-\frac{3\mu+4}{3(2\mu+3)}, \qquad H_{\mu}=-\frac{15\mu+22}{12(2\mu+3)}.
\end{equation}
The next theorem establishes the admissibility of the enriched linear functionals provided in~\eqref{enrfjun1}. To this end, we invoke the following property of the barycentric coordinates
\begin{equation}\label{tosub}
\lambda_j(\B m^{\star})=\frac{1}{3}, \qquad j=1,2,3. 
\end{equation}
\begin{theorem}
The enriched linear functionals~\eqref{enrfjun1} are admissible. 
\end{theorem}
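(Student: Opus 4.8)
The plan is to mirror exactly the structure of the proof of Theorem~\ref{t1d} for the first class of functionals, since by Theorem~\ref{th1} admissibility is equivalent to the nonsingularity of the matrix $N$ defined in~\eqref{matrixN}. Thus I would first compute the entries $\mathcal{G}_{j,\mu}^{\mathrm{enr}}(\varphi_i)$ for all $i,j$, using the explicit formulas~\eqref{basisphi} for the $\varphi_i$ together with the parametrization of the integration segment $t\B m_j+(1-t)\B m^{\star}$. The key computational tool is the affine-invariance of barycentric coordinates~\eqref{prop2}, which lets me write each $\lambda_i$ along the segment as a linear function of $t$; combining~\eqref{prop3} (values at midpoints) with~\eqref{tosub} (values at the barycenter) yields explicit linear expressions $\lambda_i(t\B m_j+(1-t)\B m^{\star})$ in $t$. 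Substituting these into $\varphi_i=\lambda_i(1-3\lambda_{i+1}-3\lambda_{i+2})$ produces a quadratic polynomial in $t$, whose integral against the weight $w_{\mu}(t)=t^{\mu}(1-t)^{\mu}$ reduces to a combination of beta integrals $B(\mu+1,\mu+1)$, $B(\mu+2,\mu+1)$ and $B(\mu+3,\mu+1)$, evaluated via the recurrence~\eqref{propbetafun} and symmetry~\eqref{simmbeta}.

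I expect the outcome to be a matrix with the same symmetry structure as~\eqref{matrixN1}: by the cyclic symmetry of the construction (the barycenter $\B m^{\star}$ is fixed by the cyclic relabeling of vertices, and each $\B m_j$ permutes cyclically), the diagonal entries $\mathcal{G}_{j,\mu}^{\mathrm{enr}}(\varphi_j)$ should all coincide, and the off-diagonal entries $\mathcal{G}_{j,\mu}^{\mathrm{enr}}(\varphi_i)$ with $i\neq j$ should also all coincide. Hence I anticipate
\begin{equation*}
N=\sigma_{\mu}\begin{bmatrix}
D_{\mu} & H_{\mu} & H_{\mu}\\
H_{\mu} & D_{\mu} & H_{\mu}\\
H_{\mu} & H_{\mu} & D_{\mu}
\end{bmatrix},
\end{equation*}
with $\sigma_{\mu}=B(\mu+1,\mu+1)$ and $D_{\mu},H_{\mu}$ as defined in~\eqref{fjfoidf} — this is precisely why those two constants were introduced immediately before the statement. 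A matrix of this circulant form has determinant $\sigma_{\mu}^{3}(D_{\mu}-H_{\mu})^{2}(D_{\mu}+2H_{\mu})$, so the final step is to verify that both factors $D_{\mu}-H_{\mu}$ and $D_{\mu}+2H_{\mu}$ are nonzero for every $\mu>-1$.

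The main obstacle will be the two small verifications at the end: using~\eqref{fjfoidf} one finds $D_{\mu}-H_{\mu}=-\frac{(4\mu+6)-(15\mu+22)}{12(2\mu+3)}=\frac{11\mu+16}{12(2\mu+3)}$ and $D_{\mu}+2H_{\mu}=-\frac{(8\mu+12)/2 + (15\mu+22)}{\,6(2\mu+3)\,}$ type expression, whose numerators must be shown not to vanish on $\mu>-1$. I would reduce each factor to a single rational function of $\mu$, check that its numerator is a linear polynomial with no root in $(-1,\infty)$, and note that $\sigma_{\mu}=B(\mu+1,\mu+1)>0$ there since $\mu+1>0$. Unlike the first class, where the determinant vanished at the isolated value $\gamma=0$ forcing the hypothesis $\gamma\neq0$ in Theorem~\ref{t1d}, here I expect the numerators to have no admissible root, so that no exceptional value of $\mu$ need be excluded and the functionals are admissible for all $\mu>-1$; confirming this clean behavior is the only genuinely delicate point, the rest being the routine beta-integral bookkeeping already exercised in the proof of Theorem~\ref{t1d}.
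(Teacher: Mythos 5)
Your proposal follows essentially the same route as the paper's proof: compute the entries $\mathcal{G}_{j,\mu}^{\mathrm{enr}}(\varphi_i)$ via the beta-function identities~\eqref{simmbeta},~\eqref{propbetafun} and the barycentric-coordinate properties~\eqref{prop2},~\eqref{prop3},~\eqref{tosub}, exploit the cyclic symmetry to get a matrix with constant diagonal $D_{\mu}$ and constant off-diagonal $H_{\mu}$ (the paper's prefactor is $\sigma_{\mu}/2$, not $\sigma_{\mu}$), and check that the determinant never vanishes for $\mu>-1$, so that, unlike the first class, no parameter value is excluded. The only flaw is an arithmetic slip in your trial computation: $D_{\mu}-H_{\mu}=\frac{\mu+2}{4(2\mu+3)}$, not $\frac{11\mu+16}{12(2\mu+3)}$, and correspondingly $D_{\mu}+2H_{\mu}=-\frac{7\mu+10}{2(2\mu+3)}$, which recovers the paper's determinant $-\frac{(\mu+2)^2(7\mu+10)}{256(2\mu+3)^3}\sigma_{\mu}^3$; since neither $\mu+2$ nor $7\mu+10$ has a root in $(-1,\infty)$, your conclusion stands unchanged.
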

\begin{proof}
As in Theorem~\ref{t1d}, we must prove that the matrix $N$, defined in~\eqref{matrixN}, is nonsingular. 
Using~\eqref{basisphi}, we conduct direct calculations, leveraging the properties of the beta function~\eqref{simmbeta} and~\eqref{propbetafun}, as well as the properties of the barycentric coordinates~\eqref{prop2},~\eqref{prop3} and~\eqref{tosub}, we can derive
\begin{equation*}
\mathcal{G}_{j,\mu}^{\mathrm{enr}}(\varphi_j)=\frac{\sigma_{\mu}}{2}D_{\mu}, \qquad \mathcal{G}_{j,\gamma}^{\mathrm{enr}}(\varphi_i)=\frac{\sigma_{\mu}}{2}H_{\mu}, \qquad i,j=1,2,3, \quad i\neq j,
\end{equation*}
where $\sigma_{\mu}$ is defined in~\eqref{impnot} and $H_{\mu}, D_{\mu}$ are defined in~\eqref{fjfoidf}.
Then, the matrix~\eqref{matrixN} can be written as
\begin{equation}\label{matrixN1pgb}
    N= \frac{\sigma_{\mu}}{2}\begin{bmatrix}
D_{\mu} & H_{\mu} &   H_{\mu} \\
 H_{\mu} &  D_{\mu} &  H_{\mu} \\
 H_{\mu} &  H_{\mu} & D_{\mu}
\end{bmatrix}.
\end{equation}
Its determinant is given by
\begin{equation*}
-\frac{(\mu+2)^2(7\mu+10)}{256(2\mu+3)^3}\sigma_{\mu}^3
\end{equation*}
which is different from zero for any $\mu>-1$.
\end{proof}

In the next theorem, we compute the explicit expression of the basis function of $\mathbb{P}_2(T)$ associated to the enriched finite element
\begin{equation*}
    PN_{\mu}=(T,\mathbb{P}_2(T),\Sigma_{\mu, T}^{\mathrm{enr}})
\end{equation*}
where
\begin{equation*}
\Sigma_{\mu,T}^{\mathrm{enr}}=\left\{\mathcal{I}_j^{\mathrm{CR}}, \mathcal{G}_{j,\mu}^{\mathrm{enr}}\, :\, j=1,2,3\right\}
\end{equation*}
and $\mathcal{G}_{j,\mu}^{\mathrm{enr}}$, $j=1,2,3,$ is defined in~\eqref{enrfjun1}. 

\begin{theorem}\label{th2allgal1f} 
The basis functions $\rho_i,\tau_i$,  $i=1,2,3,$ of $\mathbb{P}_2(T)$ associated to the finite element $PN_{\mu}$ have the following expressions
\begin{equation}
    \rho_i=r_{\mu} \varphi_i +q_{\mu}\sum_{\substack{k=1 \\ k \neq i}}^{3} \varphi_k + \phi_i,  \quad
    \tau_i=\frac{2}{\sigma_{\mu}\Omega_{\mu}}\left((D_{\mu}+H_{\mu})\varphi_i- H_{\mu}\sum_{\substack{k=1 \\ k \neq i}}^{3} \varphi_k\right), \qquad i=1,2,3,
\end{equation}
where
\begin{equation}\label{settingnewfamily}
    r_{\mu}=-\frac{125\mu^2+372\mu+276}{3(\mu+2)(7\mu+10)}, \qquad q_{\mu}=\frac{85\mu^2+264\mu+204}{3(\mu+2)(7\mu+10)}, \qquad \Omega_{\mu}=-\frac{(\mu+2)(7\mu+10)}{8(2\mu+3)^2}.
\end{equation}
      \end{theorem}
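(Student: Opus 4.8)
The plan is to reduce everything to the general basis formula of Theorem~\ref{th2allalf}, exactly as in the proof of Theorem~\ref{th2allal1f} for the first family. By~\eqref{rhoetau} and~\eqref{notimpsup}, once the inverse $N^{-1}=[\B c_1,\B c_2,\B c_3]$ and the moments $\mathcal{G}_{j,\mu}^{\mathrm{enr}}(\phi_i)$ are known, both $\rho_i$ and $\tau_i$ are completely determined. The cyclic symmetry of the construction lets me carry out the computation only for $i=1$ and then permute indices, so the whole argument splits into an inversion step and a moment-computation step.

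First I would invert the matrix $N$ from~\eqref{matrixN1pgb}. Since it has the form $\frac{\sigma_\mu}{2}\big((D_\mu-H_\mu)I+H_\mu J\big)$, with $I$ the identity and $J$ the all-ones $3\times 3$ matrix, its inverse is again of this type; a direct computation gives columns $\B c_i$ whose diagonal entry is proportional to $D_\mu+H_\mu$ and whose off-diagonal entries are proportional to $-H_\mu$, the common denominator being $\sigma_\mu\,\Omega_\mu$ with $\Omega_\mu=(D_\mu-H_\mu)(D_\mu+2H_\mu)$. Substituting the values of $D_\mu,H_\mu$ from~\eqref{fjfoidf} and simplifying recovers the $\Omega_\mu$ stated in~\eqref{settingnewfamily} and, through $\tau_i=\langle\B c_i,\B\varphi\rangle$, the claimed expression for $\tau_i$.

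Next I would compute the moments $\mathcal{G}_{j,\mu}^{\mathrm{enr}}(\phi_i)$. Using the linearity~\eqref{prop2} together with~\eqref{prop3} and~\eqref{tosub}, each barycentric coordinate along the segment $t\B m_j+(1-t)\B m^\star$ is an explicit affine function of $t$, so each $\phi_i=6\lambda_a\lambda_b$ from~\eqref{basisvarphi} becomes a quadratic in $t$; integrating against $w_\mu(t)=t^\mu(1-t)^\mu$ produces only the beta values $B(\mu+1,\mu+1)$, $B(\mu+2,\mu+1)$, $B(\mu+3,\mu+1)$, $B(\mu+2,\mu+2)$, which the recursion~\eqref{propbetafun} and the symmetry~\eqref{simmbeta} reduce to multiples of $\sigma_\mu$ from~\eqref{impnot}. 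A single case split ($i=j$ versus $i\neq j$) suffices, giving two values $\mathcal{G}_{j,\mu}^{\mathrm{enr}}(\phi_j)$ and $\mathcal{G}_{j,\mu}^{\mathrm{enr}}(\phi_i)$, each proportional to $\sigma_\mu$.

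Finally, I would assemble $\B w_i=-\B c_1\mathcal{G}_{1,\mu}^{\mathrm{enr}}(\phi_i)-\B c_2\mathcal{G}_{2,\mu}^{\mathrm{enr}}(\phi_i)-\B c_3\mathcal{G}_{3,\mu}^{\mathrm{enr}}(\phi_i)$ as in~\eqref{notimpsup} and read off $\rho_i=\langle\B w_i,\B\varphi\rangle+\phi_i$. I expect the main obstacle to be precisely this last algebraic simplification: combining the columns of $N^{-1}$ with the moments and reducing the resulting rational functions of $\mu$ to the compact forms $r_\mu$ and $q_\mu$ in~\eqref{settingnewfamily}. The $\sigma_\mu$ factors cancel against those in $N^{-1}$, leaving purely rational expressions whose numerators must be expanded and collected carefully against the factor $(\mu+2)(7\mu+10)$; this bookkeeping, rather than any conceptual difficulty, is where errors are most likely.
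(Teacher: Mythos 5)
Your proposal is correct and follows essentially the same route as the paper: invert the matrix $N$ from~\eqref{matrixN1pgb} (your identity-plus-all-ones decomposition indeed yields~\eqref{matfrixN1g}, and your factorization $\Omega_\mu=(D_\mu-H_\mu)(D_\mu+2H_\mu)$ checks out against~\eqref{settingnewfamily}), compute the moments $\mathcal{G}_{j,\mu}^{\mathrm{enr}}(\phi_i)$ via the beta-function and barycentric-coordinate identities, and assemble $\rho_i$, $\tau_i$ through~\eqref{rhoetau} and~\eqref{notimpsup} of Theorem~\ref{th2allalf}. The only difference is that you organize the inversion more explicitly than the paper's ``straightforward calculations,'' which is a presentational refinement rather than a different argument.
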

\begin{proof}
To prove this theorem, we use the general Theorem~\ref{th2allalf}. To this aim, by~\eqref{matrixN1pgb} and by straightforward calculations, we get
\begin{equation}\label{matfrixN1g}
    N^{-1}= \frac{2}{\sigma_{\mu}\Omega_{\mu}}\begin{bmatrix}
D_{\mu}+H_{\mu} & -H_{\mu} & -H_{\mu}\\
-H_{\mu} & D_{\mu}+H_{\mu} & -H_{\mu} \\
-H_{\mu} & -H_{\mu} &D_{\mu}+H_{\mu}
\end{bmatrix}.
\end{equation}
Moreover, by using the
properties of the beta function~\eqref{simmbeta} and~\eqref{propbetafun} as well as the properties of the barycentric coordinates~\eqref{prop2},~\eqref{prop3} and~\eqref{tosub}, we can derive 
\begin{equation*}
\mathcal{G}_{j,\mu}^{\mathrm{enr}}(\phi_j)=\frac{25\mu+38}{12(2\mu+3)}\sigma_{\mu}, \qquad \mathcal{G}_{j,\mu}^{\mathrm{enr}}(\phi_i)=\frac{5\mu+7}{6(2\mu+3)}\sigma_{\mu}, \qquad i,j=1,2,3, \quad i\neq j.
\end{equation*}
Then, by~\eqref{rhoetau} and~\eqref{notimpsup}, the theorem is proved. 
\end{proof}

\section{Numerical experiments}\label{sec3}
In this section, we examine the efficacy of the proposed enrichment strategies using various examples. We analyze the performance across the following set of test functions
\begin{eqnarray*}
    &&f_1(x,y)=e^{x+y}, \qquad f_2(x,y)=\frac{1}{x^2+y^2+8},\\
&&  f_3(x,y)=\cos(x+y+1), \\ && f_4(x,y)=\frac{\sqrt{64-81((x-0.5)^2+(y-0.5)^2)}}{9}-0.5.
\end{eqnarray*}
We use four different Delaunay triangulations (see Figure~\ref{Fig:regulatri}) obtained through the Shewchuk's triangle program~\cite{Shewchuk:1996:TEA}. Numerical results are presented in Figures~\ref{fun1}-\ref{fun4}. In these figures, we analyze the error in $L^1$-norm produced by the standard Crouzeix--Raviart finite element in comparison to the enriched finite elements $GN_{\gamma}$ and $PN_{\mu}$, with $\gamma=\mu=2$.

In our results, we observe that the approximation achieved with the novel enriched finite elements exceeds the accuracy of the traditional Crouzeix--Raviart finite element. Notably, this enhancement becomes more pronounced with the increasing number of triangles in the triangulation.

 \begin{figure}
  \centering
   \includegraphics[width=0.24\textwidth]{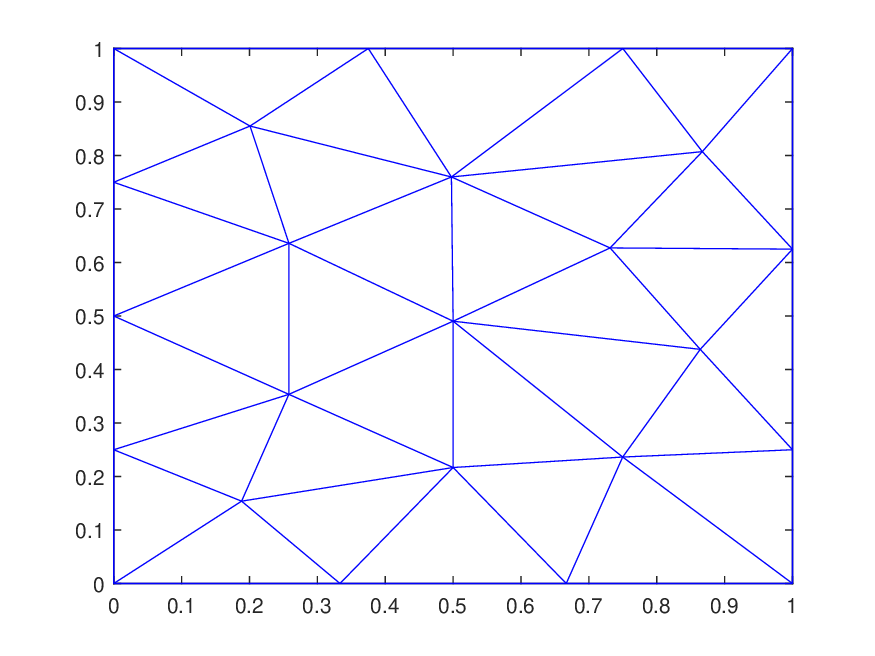} 
    \includegraphics[width=0.24\textwidth]{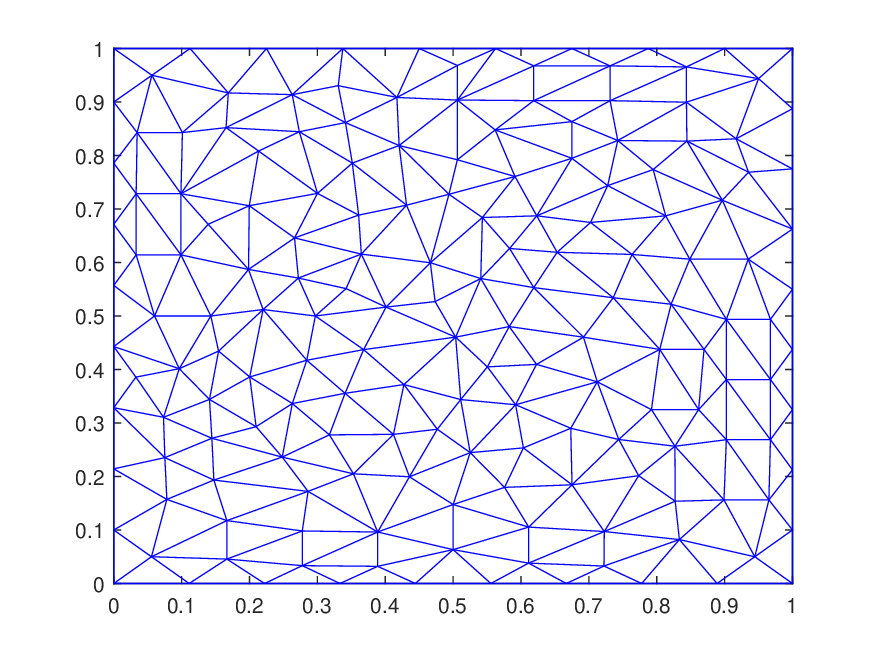} 
        \includegraphics[width=0.24\textwidth]{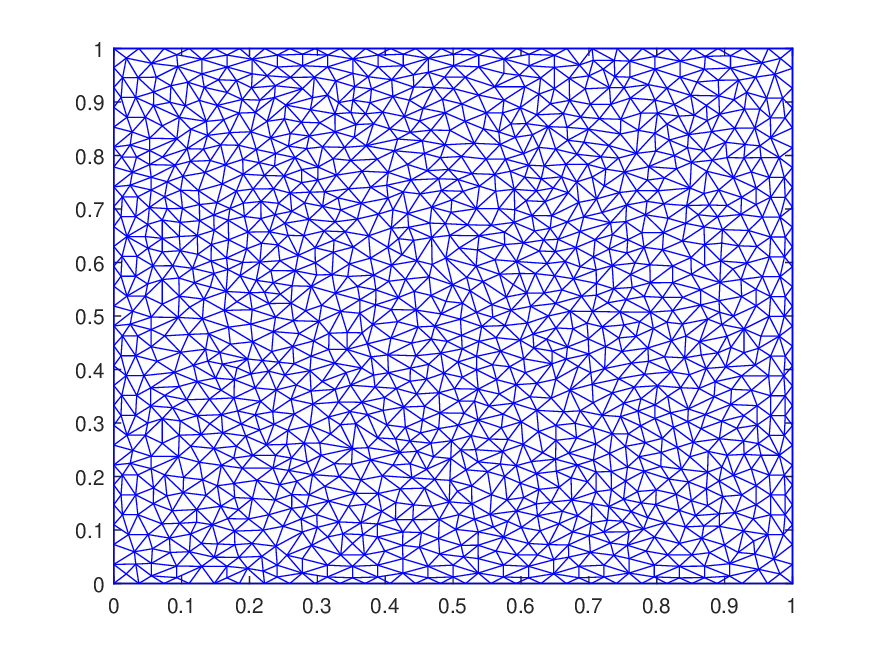}
            \includegraphics[width=0.24\textwidth]{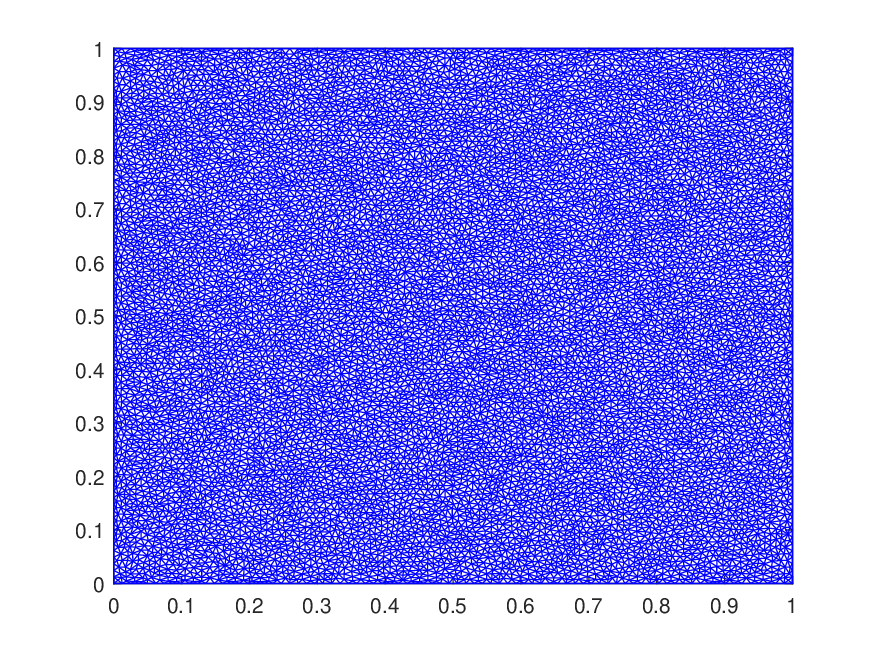}
         \caption{Delaunay triangulation of $N=33$, $N=306$, $N=2650$ and $N=23576$ tringles with no angle smaller than ${20}^{\circ}$.}
          \label{Fig:regulatri}
\end{figure}

 \begin{figure}
  \centering
   \includegraphics[width=0.49\textwidth]{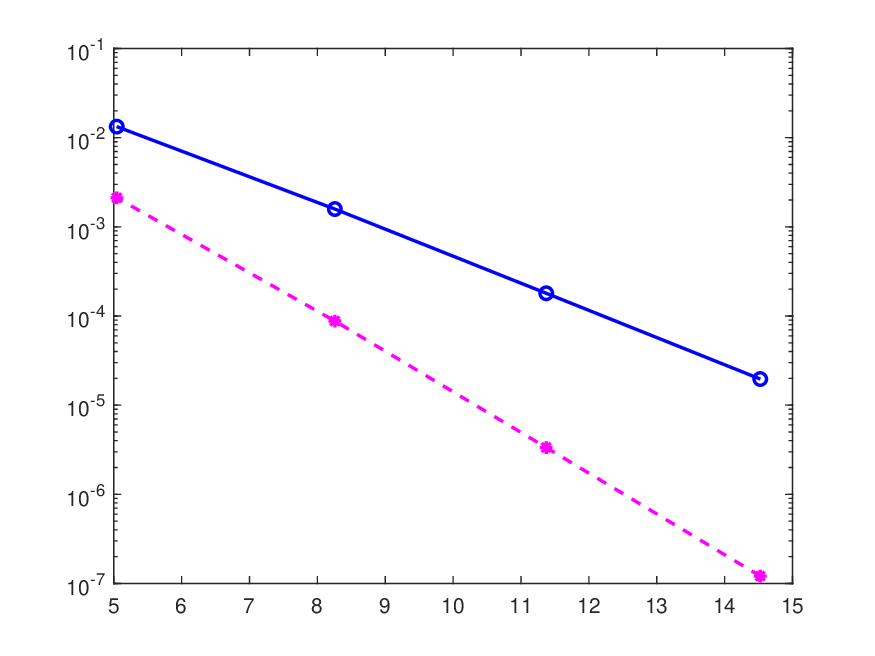} 
    \includegraphics[width=0.49\textwidth]{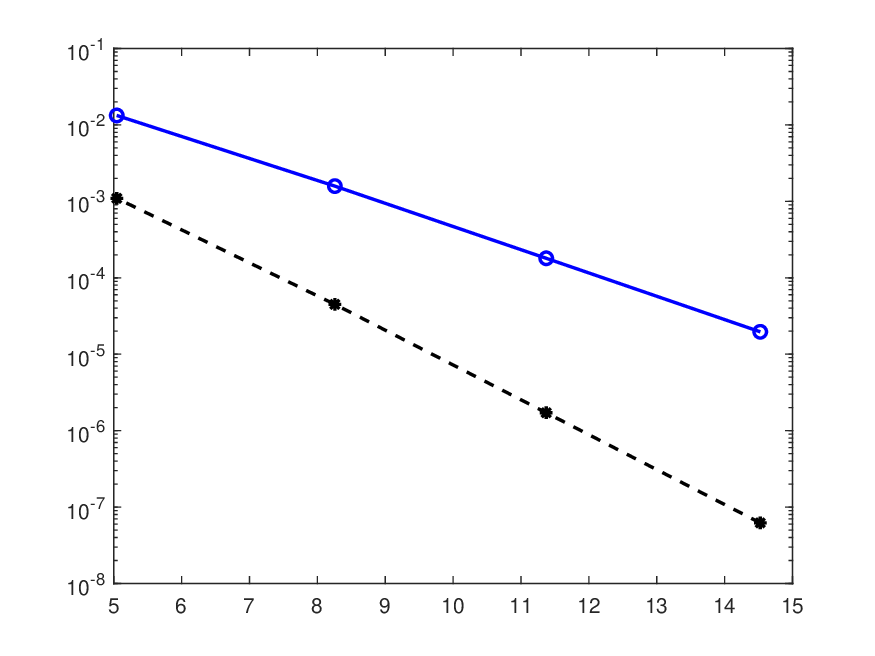} 
         \caption{Loglog plot of the errors in $L^1$-norm in approximating the function $f_1$. The blue line represents the trend of approximation errors obtained using the standard Crouzeix--Raviart finite element, the magenta line corresponds to the errors obtained with the enriched finite element $GN_{\gamma}$ with $\gamma=2$ (left), and the black line represents the trend of approximation errors using the finite element $PN_{\mu}$ with $\mu=2$ (right). The comparisons are conducted by using Delaunay triangulations of Figure~\ref{Fig:regulatri}.}
          \label{fun1}
\end{figure}

 \begin{figure}
  \centering
   \includegraphics[width=0.49\textwidth]{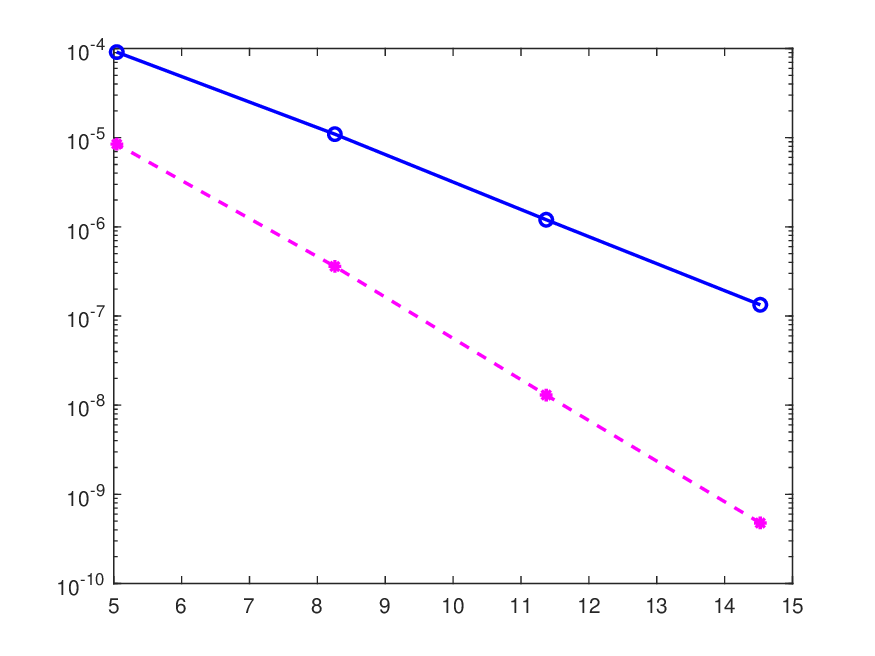} 
    \includegraphics[width=0.49\textwidth]{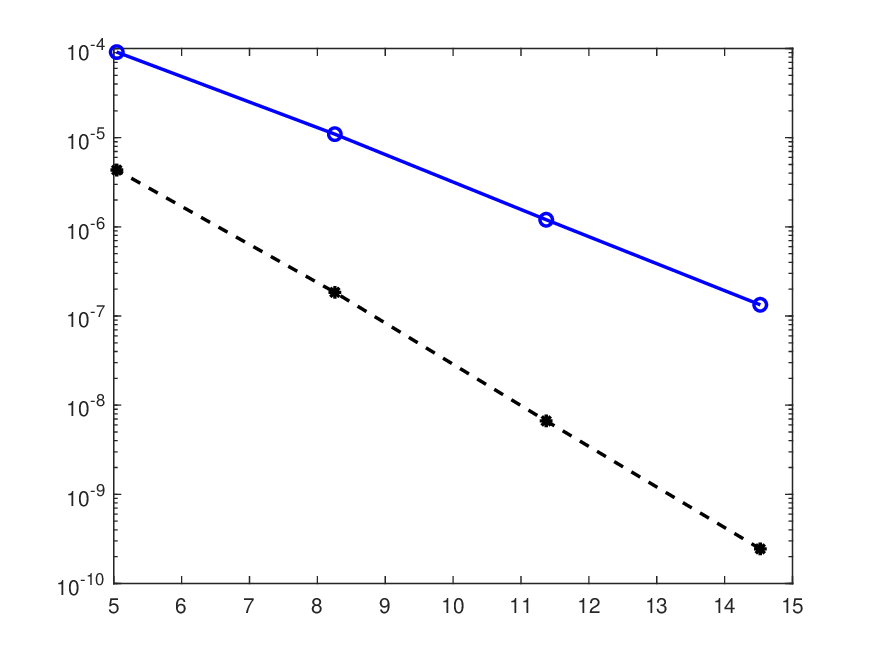} 
         \caption{Loglog plot of the errors in $L^1$-norm in approximating the function $f_2$. The blue line represents the trend of approximation errors obtained using the standard Crouzeix--Raviart finite element, the magenta line corresponds to the errors obtained with the enriched finite element $GN_{\gamma}$ with $\gamma=2$ (left), and the black line represents the trend of approximation errors using the finite element $PN_{\mu}$ with $\mu=2$ (right). The comparisons are conducted by using Delaunay triangulations of Figure~\ref{Fig:regulatri}.}
          \label{fun2}
\end{figure}

 \begin{figure}
  \centering
   \includegraphics[width=0.49\textwidth]{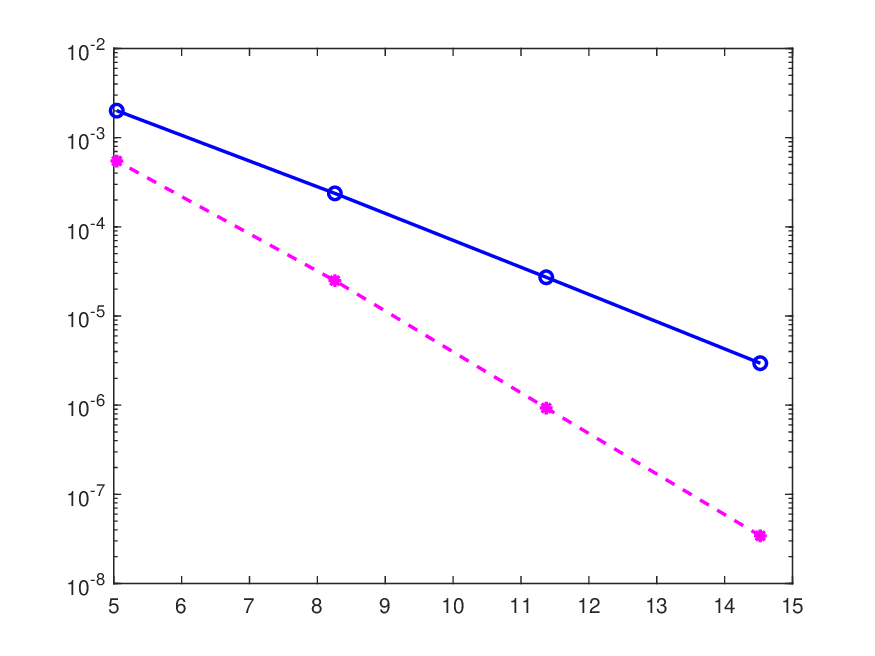} 
    \includegraphics[width=0.49\textwidth]{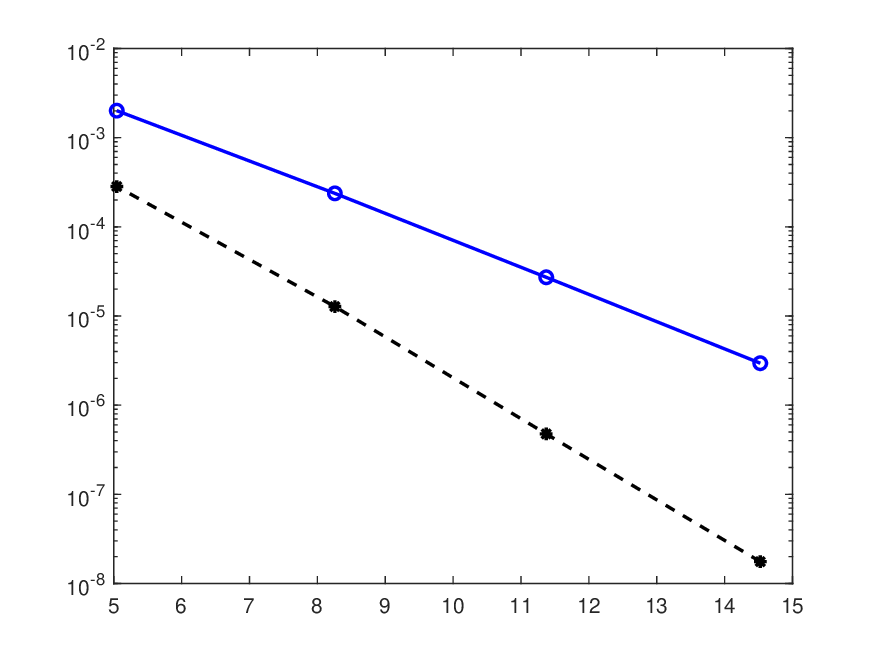} 
         \caption{Loglog plot of the errors in $L^1$-norm in approximating the function $f_3$. The blue line represents the trend of approximation errors obtained using the standard Crouzeix--Raviart finite element, the magenta line corresponds to the errors obtained with the enriched finite element $GN_{\gamma}$ with $\gamma=2$ (left), and the black line represents the trend of approximation errors using the finite element $PN_{\mu}$ with $\mu=2$ (right). The comparisons are conducted by using Delaunay triangulations of Figure~\ref{Fig:regulatri}.}
          \label{fun3}
\end{figure}

 \begin{figure}
  \centering
   \includegraphics[width=0.49\textwidth]{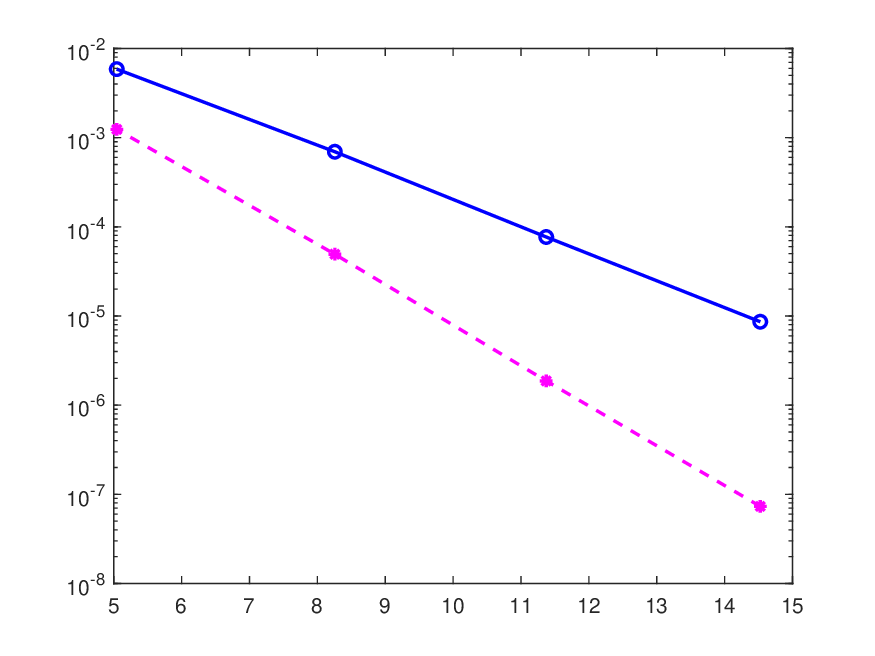} 
    \includegraphics[width=0.49\textwidth]{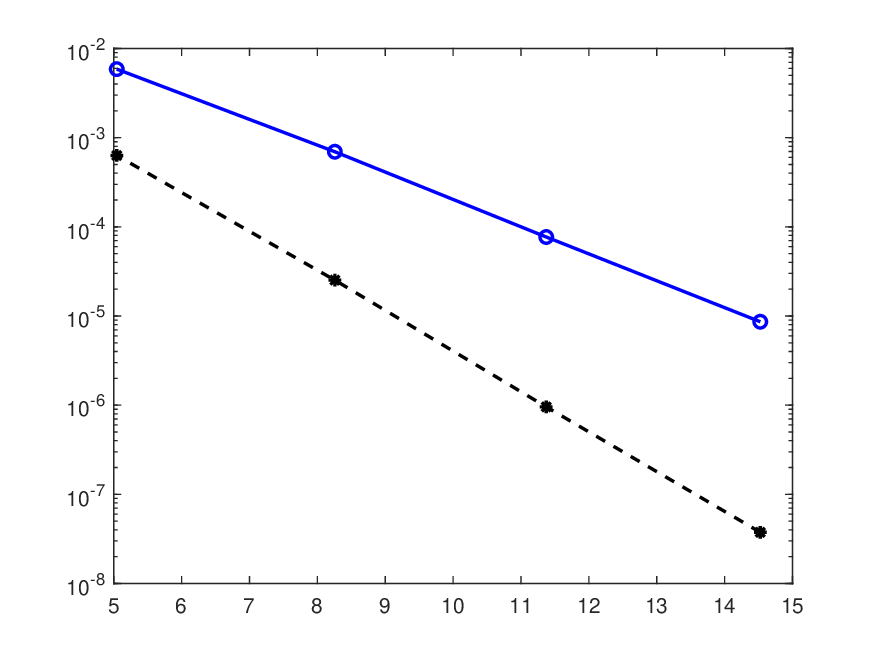} 
         \caption{Loglog plot of the errors in $L^1$-norm in approximating the function $f_4$. The blue line represents the trend of approximation errors obtained using the standard Crouzeix--Raviart finite element, the magenta line corresponds to the errors obtained with the enriched finite element $GN_{\gamma}$ with $\gamma=2$ (left), and the black line represents the trend of approximation errors using the finite element $PN_{\mu}$ with $\mu=2$ (right). The comparisons are conducted by using Delaunay triangulations of Figure~\ref{Fig:regulatri}.}
          \label{fun4}
\end{figure}

\section*{Acknowledgments}
This research has been achieved as part of RITA \textquotedblleft Research
 ITalian network on Approximation'' and as part of the UMI group ``Teoria dell'Approssimazione
 e Applicazioni''. The author is a member of the INdAM Research group GNCS.

\bibliographystyle{elsarticle-num}
\bibliography{bibliography}

\end{document}